\newtheorem{thm}{Theorem}[section]
\newtheorem{lem}{Lemma}[section]
\newtheorem{fact}{Fact}[section]
\newtheorem{claim}{Claim}[section]
\newtheorem{definition}{Definition}[section]
\begin{document}
\title{Outerplanar Tur\'{a}n numbers of cycles and paths\footnote{Supported by the National Natural Science Foundation of China
(No. 12171066), Anhui Provincial Natural Science Foundation (No. 2108085MA13) and NSF of Education Ministry of Anhui Province (No. KJ2020B06).}}
\author{{\bf Longfei Fang}, {\bf Mingqing Zhai}\thanks{Corresponding author: mqzhai@chzu.edu.cn
(M.Zhai)}
\\
{\footnotesize School of Mathematics and Finance, Chuzhou
University, Anhui, Chuzhou, 239012, China}}
\date{}

\date{}
\maketitle
{\flushleft\large\bf Abstract}
A graph is outerplanar
if it can be embedded in a plane such that all vertices lie on its outer face.
The outerplanar Tur\'{a}n number of a given graph $H$,
denoted by ${\rm ex}_{\mathcal{OP}}(n,H)$,
is the maximum number of edges over all outerplanar graphs on $n$ vertices
which do not contain a copy of $H$.
In this paper, the outerplanar Tur\'{a}n numbers of cycles and paths are completely determined.

\begin{flushleft}
\textbf{Keywords:} Tur\'{a}n number; Outerplanar graph; Cycle; Path
\end{flushleft}
\textbf{AMS Classification:}  05C35, 05C38

\section{Introduction}
~~~~All graphs considered in this paper are undirected and simple.
Given a graph $G$, we use $V(G)$ to denote
the vertex set, $E(G)$ the edge set, $|G|$ the number of vertices, $e(G)$ the number of edges, respectively.
For $v\in V(G) $, we denote by $ N_G(v) $ the \textit{neighborhood} of $v$ and $d_G(v)$ the \textit{degree} of $v$.
For $S\subseteq V(G)$,
let $G[S]$ (resp. $G-S$) be the subgraph of $G$ induced by $S$ (resp. $V(G)-S$).
The \textit{complete graph}, \textit{path} and \textit{cycle} on $k$ vertices are denoted by $K_k,P_k$ and $C_k$, respectively.

Given a graph $H$, a graph is said to be \emph{$H$-free} if it contains no $H$ as a subgraph.
As one of the earliest results in extremal graph theory, the Tur\'{a}n Theorem gives
the maximum number of edges in a $K_k$-free graph on $n$ vertices.
The \emph{Tur\'{a}n number} of a graph $H$, denoted by ${\rm ex}(n,H)$, is the maximum number of edges in an $H$-free graph on $n$ vertices.
In 1959, Erd\H{o}s and Gallai \cite{DX} showed that
${\rm ex}(n,P_k)\le \frac{(k-2)n}{2}$ with equality if and only if $k-1~|~n$.
For all $n$ and $k$, ${\rm ex}(n,P_k)$ was determined by Faudree and Schelp \cite{Faudree}
and independently by Kopylov \cite{KOP};
and ${\rm ex}(n,C_{2k+1})$ was determined by F\"{u}redi and Gunderson \cite{Furedi}.
However, the exact value of ${\rm ex}(n,C_{2k})$ is still open.
For more results on Tur\'{a}n-type problem, we refer the readers to a survey \cite{KC}.

In 2015, Dowden \cite{DZ} initiated the study of Tur\'{a}n-type problem when the host graph is planar, i.e., how many edges can an $H$-free planar graph on $n$ vertices have?
The \emph{planar Tur\'{a}n number} of a graph $H$,
denoted by ${\rm ex}_{\mathcal{P}}(n,H)$, is the maximum number of edges in an $H$-free planar graph on $n$ vertices.
As pointed in \cite{DZ},
${\rm ex}_{\mathcal{P}}(n,K_3)=2n-4$ and ${\rm ex}_{\mathcal{P}}(n,K_k)=3n-6$ for every $n\ge 6$ and $k\ge 4$.
Since the planar Tur\'{a}n problem on $K_k$ is trivial for $k\ge 5$,
the next natural type of graphs considered are cycles and others variations.
Dowden \cite{DZ} showed that ${\rm ex}_{\mathcal{P}}(n,C_4)\le \frac{15(n-2)}{7}$ for all $n\ge 4$ with equality when $n\equiv30 \pmod {70}$,
and ${\rm ex}_{\mathcal{P}}(n,C_5)\le \frac{12n-33}{5}$ for all $n\ge 11$ where the bound is also sharp.
By using triangular-block skill,
Ghosh et al. \cite{DG} proved that ${\rm ex}_{\mathcal{P}}(n,C_6)\le \frac{5n}{2}-7$ for $n\ge 18$, with equality when $n\equiv10 \pmod {18}$.
Lan, Shi and Song \cite{D, BT} obtained several sufficient conditions on $H$ which
yield ${\rm ex}_{\mathcal{P}}(n,H)=3n-6$, and studied the planar Tur\'{a}n numbers of wheels, stars,
$(t,r)$-fans and theta graphs.
In \cite{HH}, Lan and Shi determined the planar Tur\'{a}n number of $P_k$ for $6\le k\le 11$,
and in \cite{B20} Lan, Shi and Song gave a conjecture on ${\rm ex}_{\mathcal{P}}(n,P_k)$.
Up to now, both ${\rm ex}_{\mathcal{P}}(n,C_k)$ and ${\rm ex}_{\mathcal{P}}(n,P_k)$ are still open for general $k$.
For more results on planar Tur\'{a}n-type problem, we can see a survey
due to Lan, Shi and Song \cite{B20}.

A graph $G$ is \emph{outerplanar} if it has an embedding in a plane such that all vertices belong to the boundary of its outer face.
In the past decades, a quantity of works have been carried out in the area of outerplanar graphs
(see, for example, \cite{b13,b12,X,Y,b14}).
The \emph{outerplanar Tur\'{a}n number} of a graph $H$, denoted by ${\rm ex}_{\mathcal{OP}}(n,H)$, is the maximum number of edges in an $H$-free outerplanar graph on $n$ vertices.
The \emph{generalized outerplanar Tur\'{a}n number} of $H$, denoted by ${f}_{\mathcal{OP}}(n,H)$,
is the maximum number of copies of $H$ in an $n$-vertex outerplanar graph.
Very recently, Matolcsi and Nagy \cite{Matolcsi} gave an exact value of ${f}_{\mathcal{OP}}(n,P_3)$
and a best asymptotic value of ${f}_{\mathcal{OP}}(n,P_4)$;
Gy\H{o}ri, Paulos and Xiao \cite{Gyori} gave an exact value of ${f}_{\mathcal{OP}}(n,P_4)$
and a best asymptotic value of ${f}_{\mathcal{OP}}(n,P_5)$.

In this paper, we focus on the outerplanar Tur\'{a}n numbers of cycles and paths.
In Sections 2 and 3, we obtain exact values of ${\rm ex}_{\mathcal{OP}}(n,C_k)$ and 
${\rm ex}_{\mathcal{OP}}(n,P_k)$, respectively.

\section{Outerplanar Tur\'{a}n number of cycles}
~~~~
We shall first introduce some notations and definitions on outerplanar graphs.
An \emph{outerplane graph} $G$ is a planar embedding of an outerplanar graph
such that all vertices lie on the boundary of $\tau(G)$, where
$\tau(G)$ is the outer face of $G$.
As usual, $F$ denotes a face of $G$,
and $F$ sometimes also denotes the subgraph induced by edges on its boundary.
An edge $e$ is said to be \emph{incident} with a face $F$ if $e\in E(F)$.
The $\emph{size}$ of a face $F$ is the number of edges in $ E(F)$,
cut edges being counted twice.
A face of size $k$ is called a \emph{$k$-face} and a face of size at most $k-1$ is called a \emph{$k^-$-face}.
There are some known facts on outerplane graphs.

\begin{fact}\label{fact2.1} Let $G$ be an outerplane graph with $|G|\ge 3$.\\
(\romannumeral1) Every inner face (if exists) is a cycle.\\
(\romannumeral2) If $G$ is 2-connected, then $\tau(G)$ is a Hamilton cycle.\\
(\romannumeral3) If $G$ is edge-maximal, then $G$ is a 2-connected graph with $e(G)=2|G|-3$.
\end{fact}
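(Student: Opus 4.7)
The plan is to prove the three parts in order, leveraging the outerplanarity hypothesis (every vertex on $\tau(G)$) and standard planar graph arguments. The main leverage throughout is that any subgraph sitting strictly inside an inner face would have vertices not on the outer face, contradicting outerplanarity.

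For (i), I would argue by contradiction. Suppose an inner face $F$ has a boundary walk that is not a cycle. In any plane graph the boundary of a face is a closed walk, so if it is not a cycle then either some edge $e$ is traversed twice or some vertex $v$ is visited twice. In the first case $e$ is a cut edge and both its sides belong to $F$; one of the two components of $G-e$ then lies entirely in the interior of $F$, so its vertices are separated from $\tau(G)$ by $F$, contradicting outerplanarity. In the second case $v$ is a cut vertex and some block at $v$ sits inside $F$; again that block's other vertices cannot lie on $\tau(G)$. Hence the boundary walk of $F$ is a cycle.

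For (ii), given that $G$ is 2-connected with $|G|\ge 3$, the boundary of every face in a 2-connected plane graph is a cycle, and in particular $\tau(G)$ is a cycle. Since outerplanarity forces every vertex to lie on $\tau(G)$, this cycle is Hamiltonian.

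For (iii), I would first show that an edge-maximal outerplane graph $G$ on $n\ge 3$ vertices is 2-connected. If $G$ were disconnected, or had a cut vertex $v$ with blocks $B_1,B_2$ at $v$, then in the outerplane embedding I can pick vertices $u_1\in B_1\setminus\{v\}$ and $u_2\in B_2\setminus\{v\}$ that are consecutive along $\tau(G)$ (after suitably rotating the embedding around $v$) and add the edge $u_1u_2$ while keeping all vertices on the outer face, contradicting edge-maximality. Once 2-connectedness is established, (ii) gives that $\tau(G)$ is a Hamilton cycle, so $\tau(G)$ contributes $n$ edges to the boundary count. Next, every inner face is a triangle: by (i) each inner face is a cycle, and if some inner face had length $\ge 4$ we could insert a chord inside it without violating outerplanarity, again contradicting edge-maximality. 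Finally, Euler's formula $n-e+f=2$ combined with the edge-face incidence count $2e=3(f-1)+n$ yields $e=2n-3$.

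The main obstacle is the 2-connectedness step in (iii): writing down cleanly why a cut vertex allows insertion of a new edge requires being careful about the cyclic order of blocks around the cut vertex in the outerplane embedding, and showing that two vertices from different blocks can actually be placed adjacent along the outer face after a legitimate re-embedding. The remaining pieces are routine once (i) and (ii) are in hand.
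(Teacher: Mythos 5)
The paper does not prove Fact \ref{fact2.1} at all: it is introduced with the phrase ``There are some known facts on outerplane graphs'' and used as folklore, so there is no argument of the authors' to compare yours against. Your derivation is the standard textbook one and is essentially correct. Two spots deserve a little more care than your sketch gives them. First, in (iii), when you triangulate an inner face of length $s\ge 4$ by inserting a chord, you must check that the chosen chord is not already an edge of $G$ drawn elsewhere; this is fine because if every chord of that $s$-cycle were present then $G$ would contain $K_4$, which is not outerplanar, so some chord is available. Second, for the cut-vertex step you do not actually need to re-embed: since a cut vertex $v$ is visited at least twice by the boundary walk of $\tau(G)$, you can take $u_1,u_2$ to be the two neighbours of $v$ that immediately precede and follow one such visit (they lie in different blocks, hence are non-adjacent), and draw $u_1u_2$ in the outer face alongside the path $u_1vu_2$; the vertex $v$ stays on $\tau(G)$ via its other visits, so outerplanarity is preserved and edge-maximality is contradicted. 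With these points made explicit, your proof is complete and is exactly the argument the authors are implicitly invoking.
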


Now we give a definition that we will use in subsequent proof.

\begin{definition}\label{de2.1}
Let $k\ge 3$ and $G$ be an outerplane graph with $e_0,e_l\in E(G)$.
We say that $e_0$ is \emph{$k$-face-connected} to $e_l$ and denote $e_0\sim_k e_l$,
if $e_0=e_l$, or there is a sequence $e_0F_1e_1F_2 \dots F_{l}e_l$  whose terms are alternately edges and  inner $k^-$-faces,
such that both $e_{i-1}$ and $e_i$ are incident with $F_i$ for
each $i\in \{1,\cdots,l\}$.
\end{definition}

\begin{figure}[!ht]
	\centering
	\includegraphics[width=0.4\textwidth]{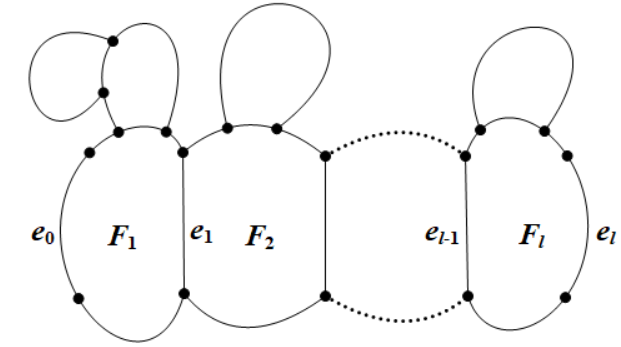}
	\caption{The subgraph $G(k,e)$, where $e=e_0$. }{\label{figu.2}}
\end{figure}

We can see that $k$-face-connectedness is an equivalence relation on $E(G)$.
Denote by $G(k,e)$ the subgraph of $G$ induced by edges in an equivalence class $\{e'\in E(G): e'\sim_k e\}$
(see Fig. \ref{figu.2}).
We have some facts on $G(k,e)$.

\begin{fact}\label{fact2.2}
Let $k\ge 3$ and $G$ be an outerplane graph with $e\in E(G)$. Then\\
(i) every inner $k^-$-face $F$ of $G$ is an inner face of $G(k,e)$ provided that $e\in E(F)$;\\
(ii) every inner face of $G(k,e)$ is an inner $k^-$-face of $G$;\\
(iii) either $G(k,e)\cong K_2$ or $G(k,e)$ is a 2-connected outerplane subgraph;\\
(iv) $G =G(k,e)$ if $G$ is 2-connected and each inner face is a $k^-$-face.
\end{fact}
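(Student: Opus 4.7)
Set $H := G(k,e)$ and let $\mathcal F_k$ denote the collection of inner $k^-$-faces of $G$. The plan is to prove the four items in the order (i), (iii), (iv), (ii); the subtle point is (ii), which I will reduce to an Euler-characteristic count using the structural picture built in (iii).

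Part (i) is essentially a restatement of the definition: each $e' \in E(F)$ satisfies $e \sim_k e'$ via the one-step chain $e, F, e'$, so $E(F) \subseteq E(H)$. Since $F$ is a cycle by Fact \ref{fact2.1}(i) bounding a region of the plane that is free of edges of $G$, and $H \subseteq G$, that region is also free of edges of $H$, so $F$ is an inner face of $H$.

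For (iii), if $e$ lies on no face of $\mathcal F_k$ then the equivalence class of $e$ is $\{e\}$ and $H \cong K_2$. Otherwise pick $F_0 \in \mathcal F_k$ with $e \in E(F_0)$, and let $\mathcal F^* \subseteq \mathcal F_k$ be the set of $k^-$-faces reachable from $F_0$ by a chain of edge-sharing inner $k^-$-faces; unwinding the definition of $\sim_k$ shows $E(H) = \bigcup_{F \in \mathcal F^*} E(F)$. Working inside the 2-connected block of $G$ containing $F_0$ (outside this block no edges of $\mathcal F^*$ can appear, since distinct blocks share no edge), the weak-dual face-adjacency graph is a tree, so $\mathcal F^*$ spans a subtree $T^*$. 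Processing the nodes of $T^*$ in a BFS order rooted at $F_0$, each new step glues one cycle $F$ onto the already-built 2-connected outerplane subgraph along exactly one shared edge, which preserves both 2-connectedness and outerplanarity; this proves (iii). Part (iv) is then immediate: when $G$ is 2-connected with every inner face in $\mathcal F_k$, the tree $T^*$ equals the entire weak dual of $G$, so $\mathcal F^*$ exhausts the inner faces of $G$ and $H = G$.

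For (ii), set $t := |\mathcal F^*|$. In the construction from (iii), each of the $t-1$ gluings identifies one edge and its two endpoints, giving $|V(H)| = \sum_{F \in \mathcal F^*} |E(F)| - 2(t-1)$ and $|E(H)| = \sum_{F \in \mathcal F^*} |E(F)| - (t-1)$, so that $|V(H)| - |E(H)| = 1 - t$. Euler's formula for the connected plane graph $H$ then yields exactly $t$ inner faces. Applying (i) for each $F \in \mathcal F^*$ to a chosen edge $e' \in E(F)$, and using $G(k,e') = G(k,e) = H$ since $e' \sim_k e$, shows that each such $F$ is already a distinct inner face of $H$; these account for all $t$ inner faces, so every inner face of $H$ lies in $\mathcal F^* \subseteq \mathcal F_k$. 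The main obstacle, and my reason for doing (iii) before (ii), is the \emph{a priori} possibility that gluing cycles together could create a spurious inner face of $H$ that is not a face of $G$; the tree structure of the weak dual, together with the Euler count, rules this out.
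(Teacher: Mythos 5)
Your argument is correct, but it reaches part (ii) by a genuinely different route from the paper. The paper proves (ii) directly: it takes an inner face $F_0$ of $G(k,e)$, two distinct edges of $F_0$, a minimal alternating chain of inner $k^-$-faces of $G$ joining them, and argues that $F_0$ must itself occur in that chain, since otherwise an endpoint of the chosen edge would be trapped off the outer face of $G(k,e)$ --- a short topological contradiction read off a figure; its (iii) is handled by citing Whitney's ear-decomposition theorem, and its (iv) by comparing the two outer cycles. You instead first realize $G(k,e)$ explicitly as $\bigcup_{F\in\mathcal{F}^*}F$ for a set $\mathcal{F}^*$ of $k^-$-faces spanning a subtree of the weak dual, glue the faces on one at a time (in effect constructing the ear decomposition the paper only invokes), and then count: $|V|-|E|=1-t$ forces exactly $t$ inner faces by Euler's formula, all of which part (i) already accounts for. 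This buys a cleaner, quantitative proof of (ii) and makes (iv) immediate, at the price of importing the weak-dual-tree property of $2$-connected outerplane graphs. The one step you should spell out is that each new face $F$ meets the previously built subgraph $H_i$ in \emph{exactly} the shared edge $ab$ and its two endpoints, and no further vertices, since your vertex count $\sum_F|E(F)|-2(t-1)$ silently uses this; if an extra vertex were identified, Euler's formula would produce a spurious inner face and the argument would collapse. The claim is true: the chord $ab$ separates the disk bounded by $\tau(G)$ into two regions, any weak-dual path from the parent of $F$ to a face on $F$'s side of $ab$ must cross $ab$ and hence pass through $F$ itself, so the weak-dual-connected set $H_i$ lies entirely in the closed region opposite $F$ and can meet $\overline{F}$ only in $\{a,b\}$.
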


\begin{proof}
(i) Assume that $e\in  E(F)$.
Since $F$ is an inner $k^-$-face of $G$, the edge $e$ is $k$-face-connected to each edge in $ E(F)$.
It follows that $ E(F)\subseteq E(G(k,e))$.
Note that $G(k,e)$ is an outerplane subgraph of $G$ and $F$ does not contain the infinite-point.
Therefore, $F$ is an inner face of $G(k,e)$.

\vspace{1mm}
(ii) Let $F_0$ be an arbitrary inner face of $G(k,e)$.
By Fact \ref{fact2.1}, $F_0$ is a cycle,
and hence we can select two distinct edges $e_0,e_l\in  E(F_0)$.
Since $e_0,e_l\in  E(F_0)\subseteq E(G(k,e))$,
there exists a sequence $\pi:=e_0F_1e_1F_2 \dots F_{l}e_l$ whose terms are alternately edges and inner $k^-$-faces of $G$,
such that $e_{i-1},e_i\in  E(F_i)$ for each $i\in \{1,\cdots,l\}$.
We may assume that the length of $\pi$ is minimal.
Then, faces in $\pi$ are distinct.

\begin{figure}[!ht]
	\centering
	\includegraphics[width=0.3\textwidth]{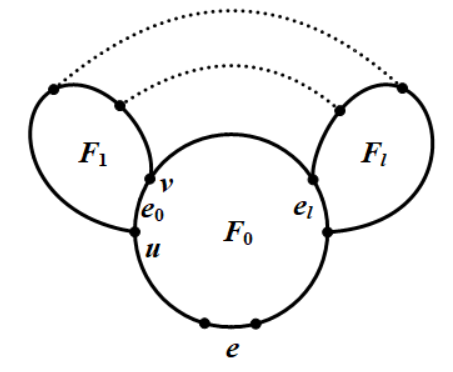}
	\caption{Local structure of $G(k,e)$. }{\label{figu.1}}
\end{figure}

To prove (ii), it suffices to show $F_0\in \{F_1,\dots,F_l\}$.
If not, then we can see that $v$, an endpoint of $e_0$,
does not lie on $\tau(G(k,e))$ (see Fig. \ref{figu.1}),
which contradicts the fact that $G(k,e)$ is outerplane.
Thus, $F_0\in \{F_1,\dots,F_l\}$, as desired.

\vspace{1mm}
(iii) If $G(k,e)\ncong K_2$, then by Definition \ref{de2.1} we can see that $G(k,e)$
has an ear decomposition. By the famous ear decomposition theorem due to Whitney (see \cite{HW}),
$G(k,e)$ is 2-connected.

\vspace{1mm}
(iv) Since $G$ is 2-connected,
we have $|G|\ge 3$, and $e$ is incident with at least one inner face, say $F$, of $G$.
Since every inner face of $G$ is a $k^-$-face, by (i) $ E(F)\subseteq E(G(k,e))$.
By Fact \ref{fact2.1}, $F$ is a cycle, and so $|G(k,e)|\ge3$.
Since $G$ is 2-connected, by Fact \ref{fact2.1} $\tau(G)$ is a Hamilton cycle;
and by (iii) we similarly have that $\tau(G(k,e))$ is a Hamilton cycle.
If $\tau(G)=\tau(G(k,e))$, then by (ii) we have $G=G(k,e)$.
Next, it suffices to show $\tau(G)=\tau(G(k,e))$.

Suppose to the contrary, then there is an edge $e'\in  E(\tau(G(k,e)))\setminus  E(\tau(G))$,
as both $\tau(G)$ and $\tau(G(k,e))$ are cycles.
Since $e'\not\in  E(\tau(G))$, the edge $e'$ is incident with two inner faces $F'$ and $F''$ of $G$.
Note that both $F'$ and $F''$ are $k^-$-faces with $e'\in  E(F')\cap  E(F'')$.
By (i), both $F'$ and $F''$ are inner faces of $G(k,e')$, where $G(k,e')=G(k,e)$.
This contradicts the fact that $e'\in  E(\tau(G(k,e)))$.
\end{proof}

Given an outerplane graph $G$ with $e\in E(G)$.
By Fact \ref{fact2.2},
either $G(k,e)\cong K_2$ or $G(k,e)$ is a 2-connected outerplane graph with each inner face a $k^-$-face.
We call $G(k,e)$ a \emph{$k$-block} of $G$.
Particularly,
if $G\cong K_2$ or $G$ itself is a 2-connected outerplane graph with each inner face a $k^-$-face,
then $G$ is called a \emph{trivial $k$-block}.
By Fact \ref{fact2.2}, we also have $G=G(k,e)$ for any $e\in E(G)$ provided that $G$ is a trivial $k$-block.
In addition, we see that $\{E(G(k,e)): e\in E(G)\}$ is a partition of $E(G)$,
and hence $G$ has a decomposition of its $k$-blocks.
We now give several lemmas.

\begin{lem}\label{lemma2.1}
Let $t,k$ be two integers with $t\ge 1$ and $k\ge 3$.
Let $B_1,\dots,B_t$ be $C_k$-free trivial $k$-blocks.
Then
\begin{align*}
  \min\{|G|:~G\in \mathscr{B}_k(B_1,\dots,B_t)\}\ge \sum_{i=1}^{t}|B_i|-\Big\lfloor\frac{t-1}{k}\Big\rfloor-t+1,
\end{align*}
where $\mathscr{B}_k(B_1,\dots,B_t)$ is the set of connected $C_k$-free outerplane graphs
with a decomposition of $t$ $k$-blocks,
which are isomorphic to $B_1,\dots,B_t$, respectively.
\end{lem}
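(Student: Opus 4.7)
The plan is to convert the inequality, via Euler's formula, into the equivalent statement that $p \le \lfloor(t-1)/k\rfloor$, where $p$ denotes the number of inner faces of $G$ of size at least $k$, and then derive this face bound from the $C_k$-freeness of $G$ together with the forest structure of the weak dual of an outerplane graph.

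First I would apply Euler's formula to $G$ and separately to each block $B_i$. Since $e(B_i)=|B_i|+f_{\text{inner}}(B_i)-1$ in both the $K_2$ and the $2$-connected cases, and since Fact~\ref{fact2.2}(i)--(ii) implies that the inner faces of the various $B_i$ partition exactly the inner $k^-$-faces of $G$, summing these Euler equations and comparing with Euler on $G$ yields the clean identity
\begin{equation*}
|G| \;=\; \sum_{i=1}^{t}|B_i|\;-\;t\;+\;1\;-\;p.
\end{equation*}
Thus the lemma reduces to the face-counting estimate $t\ge kp+1$.

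For this, I would first note that since $G$ is $C_k$-free and every inner face of $G$ is a cycle (Fact~\ref{fact2.1}(i)), each of the $p$ ``large'' inner faces actually has size at least $k+1$. Next, I partition $E(G)$ into six classes $E_{ab}$ $(a,b\in\{o,+,-\})$ according to the types (outer, non-$k^-$-inner, $k^-$-inner) of the two faces bounding each edge. An edge is a $K_2$-block if and only if it is incident to no $k^-$-face, and the non-$K_2$-blocks correspond bijectively to the components of the subgraph $D^-$ of the weak dual induced on the $k^-$-face vertices. Using that the weak dual of a connected outerplane graph is a forest, so is $D^-$, and I obtain a closed formula for $t$ in terms of the $|E_{ab}|$ and $f^-$.

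Finally, I would double-count the edge--face incidences along the $p$ large inner faces to get $|E_{o+}|+2|E_{++}|+|E_{+-}|\ge(k+1)p$, eliminate $|E_{+-}|$ using the Euler-type relation $|E_{++}|+|E_{+-}|+|E_{--}|=p+f^--c^*$ (where $c^*$ is the number of components of the weak dual), and substitute into the formula for $t$; a short manipulation gives
\begin{equation*}
t \;\ge\; kp \;+\; c^* \;+\; |E_{oo}|,
\end{equation*}
where $|E_{oo}|$ counts the bridges of $G$. Since $t\ge 1$ forces $G$ to have at least one edge, either $G$ contains a cycle (so $c^*\ge1$) or $G$ is a tree with $n\ge2$ (so $|E_{oo}|\ge1$); in either case $t\ge kp+1$, as required. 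The main obstacle is the bookkeeping in the third paragraph---verifying the bijection between non-$K_2$-blocks and components of $D^-$, correctly enumerating the edge classes, and invoking the forest property of the weak dual---but once these are in place, the rest is a short algebraic manipulation.
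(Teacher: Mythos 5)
Your argument is correct, and it takes a genuinely different route from the paper's. The paper proves the lemma by induction on $t$: it splits $G$ at a cut-vertex, or, in the $2$-connected case, locates an inner face of size $s\ge k+1$, cuts $G$ into $s$ pieces along it, shows each $k$-block lives in one piece, and finishes with the superadditivity estimate $\sum_j\lfloor\frac{t_j-1}{k}\rfloor\le\lfloor\frac{t-s}{k}\rfloor\le\lfloor\frac{t-1}{k}\rfloor-1$. You instead prove the exact identity $|G|=\sum_{i=1}^t|B_i|-t+1-p$ (which I checked: Euler's formula on $G$ and on each block, plus the bijection from Fact~\ref{fact2.2}(i)--(ii) between inner faces of the blocks and inner $k^-$-faces of $G$, gives it directly), reducing everything to $p\le\lfloor\frac{t-1}{k}\rfloor$, and then establish $t\ge kp+c^*+|E_{oo}|$ by the dual-forest edge counts $|E_{++}|+|E_{+-}|+|E_{--}|=p+f^--c^*$ and $|E_{--}|=f^--c^-$ combined with the incidence count $|E_{o+}|+2|E_{++}|+|E_{+-}|\ge(k+1)p$; the algebra checks out, as does the bijection between non-$K_2$-blocks and components of $D^-$ and the final observation that $c^*+|E_{oo}|\ge1$ for a connected graph with an edge. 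What your approach buys is an exact formula rather than an inequality: it exposes precisely when the bound is tight (namely $p=\lfloor\frac{t-1}{k}\rfloor$), which would also shorten the constructive Lemma~\ref{lemma2.2}, and it avoids the induction and case analysis entirely. What it costs is reliance on two classical facts the paper never states: that the weak dual of an outerplane graph is a forest, and (needed so that dual edges are in bijection with the edges in $E_{++}\cup E_{+-}\cup E_{--}$) that two inner faces of an outerplane graph share at most one edge. Both are standard and follow from a Jordan-curve argument of the same flavor as the paper's proof of Fact~\ref{fact2.2}(ii), but you should prove or cite them explicitly, since the whole edge-count bookkeeping rests on them.
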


\begin{figure}[!ht]
	\centering
	\includegraphics[width=0.21\textwidth]{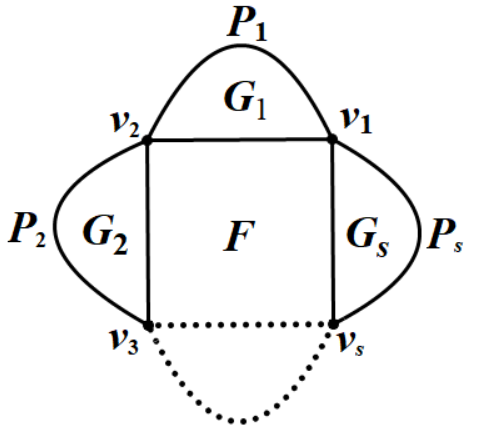}
	\caption{An example of 2-connected outerplane graph $G$. }{\label{figu.3}}
\end{figure}

\begin{proof}
We use the induction on $t$.
Let $G\in \mathscr{B}_k(B_1,\dots,B_t)$.
If $t=1$, then $|G|=|B_1|$ and the inequality holds.
Assume then that $t\ge 2$. It follows that $|G|\geq3.$

If $G$ has cut-vertices,
then there exists an end-block $G_1$,
which contains exactly one cut-vertex $v$ of $G$.
Let $G_2:=G-V(G_1-v)$.
It follows that
\begin{align}\label{align.1}
  |G|=|G_1|+|G_2|-1.
\end{align}
Since each $B_i$ is either isomorphic to $K_2$ or 2-connected,
it must be contained in one of $G_1$ and $G_2$.
We may assume without loss of generality that
$G_1$ has a decomposition of $t_1$  $k$-blocks, which are isomorphic to $B_{1},\dots,B_{t_1}$, respectively.
Then $G_2$ has a decomposition of $t-t_1$ $k$-blocks, which are isomorphic to $B_{t_1+1},\ldots,B_t$, respectively.
Note that $\max\{t_1,t-t_1\} \le t-1$.
By the induction hypothesis, we have
\begin{align}\label{align.2}
 |G_1|\ge \sum_{i=1}^{t_1}|B_i|-\Big\lfloor\frac{t_1-1}{k}\Big\rfloor-t_1+1,
\end{align}
\begin{align}\label{align.3}
 |G_2|\ge\sum_{i={t_1+1}}^{t}|B_i|-\Big\lfloor\frac{t-t_1-1}{k}\Big\rfloor-(t-t_1)+1.
\end{align}
Note that $\lfloor\frac{t_1-1}{k}\rfloor+\lfloor\frac{t-t_1-1}{k}\rfloor \le \lfloor\frac{t-1}{k}\rfloor$.
Combining with (\ref{align.1}-\ref{align.3}), we have
\begin{align*}
  |G|\ge \sum_{i=1}^{t}|B_i|-\Big\lfloor\frac{t-1}{k}\Big\rfloor-t+1.
\end{align*}

It remains to consider that $G$ is 2-connected.
Now if every inner face is a $k^-$-face,
then $G$ is a trivial $k$-block and $G=G(k,e)$ for arbitrary $e\in E(G)$,
which contradicts the fact that $G$ has a decomposition of $t\ge 2$ $k$-blocks.
Thus, $G$ has an inner face $F$ of size $s\ge k$.
Furthermore, by Fact \ref{fact2.1} $F$ is a cycle, and so $s\ge k+1$ as
$G$ is $C_k$-free.
Put $F:=v_{1}v_{2}\dots v_{s}v_{1}$.
By Fact \ref{fact2.1}, $\tau(G)$ is a Hamilton cycle.
Hence, $v_1,\ldots,v_s$ divide the boundary of $\tau(G)$ into
$s$ paths $P_1,\dots,P_s$ (see Fig. \ref{figu.3}).
Let $G_j:=G[V(P_j)]$ for $j\in \{1,\dots,s\}$.
Then $G_1,\dots,G_s$ form a decomposition of $G$.
This implies that
\begin{align}\label{align.4}
 |G|=\sum_{j=1}^{s}|G_j|-s.
\end{align}

Let $e$ be an arbitrary edge of $G$.
Then $e\in E(G_{j_0})$ for some $j_0$.
We need to show $G(k,e)=G_{j_0}(k,e)$.
It suffices to prove that $E(G(k,e))=E(G_{j_0}(k,e))$.
Let $e'$ be an arbitrary edge in $E(G_{j_0}(k,e))$.
If $e'=e$, then $e'\in E(G(k,e))$.
If $e'\neq e$, then there is a sequence $\pi:=eF_1e_1F_2\dots F_{l}e'$
whose terms are alternately edges and inner $k^-$-faces of $G_{j_0}$.
Observe that each inner face of $G_{j_0}$ is an inner face of $G$,
then all faces in $\pi$ are $k^-$-faces of $G$, and so $e'\in E(G(k,e))$.
Therefore, $E(G_{j_0}(k,e))\subseteq E(G(k,e))$.
On the other hand, let $e'\in E(G(k,e))$. If $e'=e$, then $e'\in E(G_{j_0}(k,e))$.
If $e'\neq e$, then there is a sequence $\pi':=eF_1'e_1'F_2'\dots F_{l}'e'$
whose terms are alternately edges and inner $k^-$-faces of $G$.
Since $F$ is of size $s\ge k+1$, we have $F\not\in\{F_1',\dots,F_l'\}$.
Observe that for any two inner faces of $G$, if they belong to two distinct $G_j$s
then they can only be connected by $F$.
Thus, all faces in $\pi'$ are inner $k^-$-faces of $G_{j_0}$, and so $e'\in E(G_{j_0}(k,e))$.
Hence, $E(G(k,e))\subseteq E(G_{j_0}(k,e))$.

We now have that every $k$-block $G(k,e)$ of $G$ is a $k$-block of some $G_j$.
Let $\mathcal{B}_k(G_j)$ be the set of $k$-blocks of $G_j$ for $j\in \{1,\dots,s\}$.
Observe that $E(G_i)\cap E(G_j)=\varnothing$ for any $i,j$ with $i\neq j$.
Hence, $\{\mathcal{B}_k(G_1),\dots,\mathcal{B}_k(G_s)\}$ is a partition of $\mathcal{B}_k(G)$.
Recall that $\mathcal{B}_k(G)=\{B_1,\dots,B_t\}$.
Then
\begin{align}\label{align.5}
 \sum_{j=1}^{s}\sum_{B\in \mathcal{B}_k(G_j)}|B|=\sum_{i=1}^{t}|B_i|~~~\mbox{and}~~~t=\sum_{j=1}^st_j,
\end{align}
where $t_j=|\mathcal{B}_k(G_j)|$.
By the induction hypothesis, we have
\begin{align}\label{align.6}
 |G_j|\ge \sum_{B\in \mathcal{B}_k(G_j)}|B|-\Big\lfloor\frac{t_j-1}{k}\Big\rfloor-t_j+1
\end{align}
for $j\in \{1,\dots,s\}$.
From $t=\sum_{j=1}^st_j$ and $s\ge k+1$, we have
\begin{align}\label{align.7}
\sum_{j=1}^{s}\Big\lfloor\frac{t_j-1}{k}\Big\rfloor \le \Big\lfloor\sum_{j=1}^{s}\frac{t_j-1}{k}\Big\rfloor
=\Big\lfloor \frac{t-s}{k}\Big\rfloor
\le\Big\lfloor\frac{t-1}{k}\Big\rfloor-1.
\end{align}
Combining with (\ref{align.4}-\ref{align.7}), we see that
\begin{align*}
 |G|\ge \sum_{j=1}^{s}\sum_{B\in \mathcal{B}_k(G_j)}|B|-\sum_{j=1}^{s}\Big\lfloor\frac{t_j-1}{k}\Big\rfloor-t
 \ge \sum_{i=1}^{t}|B_i|-\Big\lfloor\frac{t-1}{k}\Big\rfloor-t+1,
\end{align*}
as desired.
\end{proof}


\begin{lem}\label{lemma2.2}
Let $t,k$ be two integers with $t\ge 1$ and $k\ge 3$.
Let $B_1,\dots,B_t$ be $C_k$-free trivial $k$-blocks.
Then
\begin{align*}
  \min\{|G|:~G\in \mathscr{B}_k(B_1,\dots,B_t)\}= \sum_{i=1}^{t}|B_i|-\Big\lfloor\frac{t-1}{k}\Big\rfloor-t+1.
\end{align*}
\end{lem}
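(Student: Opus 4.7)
The plan is to exhibit a construction matching the lower bound of Lemma~\ref{lemma2.1} for every admissible list $B_1,\dots,B_t$; this will establish equality. Set $q:=\lfloor(t-1)/k\rfloor$, so the target order is $\sum|B_i|-q-t+1$. When $q=0$ (equivalently $t\le k$) the construction is just a block-tree: attach $B_2,\dots,B_t$ one at a time to $B_1$ along single cut-vertices, producing $\sum|B_i|-(t-1)$ vertices. When $q\ge 1$ the basic building block is a \emph{$(k+1)$-gadget}, defined as an inner $(k+1)$-face $F$ together with $k+1$ of the $B_i$'s, each identified along one edge of $F$. Because $F$ has size $k+1$ it is not a $k^-$-face, so its edges are absorbed into the adjacent blocks' $k$-blocks; the $k+1$ blocks share vertices of $F$ cyclically, which saves exactly one vertex compared to attaching the same blocks in a tree. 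I would then chain $q$ gadgets tree-wise, letting each consecutive pair share one block---which can be either $K_2$ (realized as a common interior chord of the two $(k+1)$-faces) or non-$K_2$ (realized as a $k^-$-block with two of its boundary edges shared with the two $(k+1)$-faces)---and attach the remaining $r:=t-qk-1$ blocks as cut-vertex pendants.

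Two routine book-keeping verifications follow. Summing vertex contributions yields $|G|=\sum|B_i|-q-t+1$, matching the bound, because each gadget contributes exactly one vertex saving. For the $k$-block decomposition, every inner face of $G$ is either a $(k+1)$-face of some gadget (not a $k^-$-face) or an inner face of some $B_i$ (a $k^-$-face by construction), so $(k+1)$-faces do not $k$-face-connect edges across distinct $B_i$'s, and Fact~\ref{fact2.2}(iv) applied inside each $B_i$ identifies $G(k,e)=E(B_i)$.

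The main obstacle is verifying that $G$ is $C_k$-free. My approach is via the weak dual of the $2$-connected core of $G$: since the core is a $2$-connected outerplane graph, its weak dual is a tree whose vertices are the inner faces, and every cycle $C$ of the core is the symmetric difference of a connected subtree $S$ of inner faces, giving $|C|=\sum_{F\in S}|F|-2(|S|-1)$. There are two cases. If $S$ contains at least one $(k+1)$-face, then the bound $|F|\ge 3$ for the remaining faces yields $|C|\ge (k+1)+3(|S|-1)-2(|S|-1)=k+|S|\ge k+1$, so $|C|\ne k$. If $S$ contains no $(k+1)$-face, then all faces of $S$ are $k^-$-faces belonging to the various $B_i$'s; since distinct $B_i$'s are edge-disjoint, the connectedness of $S$ forces $S$ to lie inside a single $B_i$, making $C$ a cycle of $B_i$ with $|C|\ne k$ by the $C_k$-freeness hypothesis. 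Cycles within cut-vertex-attached pendants fall inside a single $B_i$ and are handled by the second case. This two-case dichotomy is the heart of the proof.
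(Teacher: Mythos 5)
Your construction is essentially the paper's own: a chain of $q=\lfloor\frac{t-1}{k}\rfloor$ inner $(k+1)$-faces each decorated with blocks glued along its edges, saving one vertex per $(k+1)$-face over a pure block-tree (the paper realizes your ``shared block'' by leaving the edge $v_{s,1}v_{s,k+1}$ of each $(k+1)$-cycle free and identifying it with an outer edge of the previously built component, and it parks the leftover $t-qk$ blocks along a path rather than as cut-vertex pendants, but the vertex count is the same). The one substantive addition is your weak-dual verification that the resulting graph is $C_k$-free and has $k$-block decomposition $\{B_1,\dots,B_t\}$ --- the paper only asserts this with ``we can see that'' --- and that verification (the identity $|C|=\sum_{F\in S}|F|-2(|S|-1)$ over a subtree $S$ of inner faces, split according to whether $S$ meets a $(k+1)$-face) is correct.
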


\begin{proof}
By Lemma \ref{lemma2.1},
it suffices to find a graph $G\in\mathscr{B}_k(B_1,\dots,B_t)$ with $|G|=\sum_{i=1}^{t}|B_i|-\lfloor\frac{t-1}{k}\rfloor-t+1$.
More precisely, $G$ is a connected $C_k$-free outerplane graph
with a decomposition of $k$-blocks $B_1,\dots,B_t$.

Let $a=\lfloor\frac{t-1}{k}\rfloor$ and $b=t-ka$.
Then $ka\le t-1$ and  $b\ge 1$.
Let $G_0$ be a graph obtained from the disjoint union of $B_1,\dots,B_b$ and
a path $v_{0,1}\dots v_{0,b+1}$ by identifying an edge of
$\tau(B_{i})$ and $v_{0,i}v_{0,i+1}$ for each $i\in \{1,\dots,b\}$.
Then $G_0$ is $C_k$-free and
\begin{align}\label{align.8}
|G_0|=\sum_{i=1}^{b}|B_i|-(b-1).
\end{align}
Furthermore,
for $s\in \{1,\ldots,a\}$ let $G_s$ be a graph
obtained from the disjoint union of $B_{b+(s-1)k+1},\dots,B_{b+sk}$ and
a $(k+1)$-cycle $v_{s,1}v_{s,2}\dots v_{s,k+1}v_{s,1}$ by identifying an edge of
$\tau(B_{b+(s-1)k+i})$ and $v_{s,i}v_{s,i+1}$ for each $i\in \{1,\cdots,k\}$.
Then $G_s$ is $C_k$-free and
\begin{align}\label{align.9}
|G_s|=\sum_{i=1}^{k}|B_{b+(s-1)k+i}|-(k-1)
\end{align}
for $s\in \{1,\dots,a\}$.

\begin{figure}[!ht]
	\centering
	\includegraphics[width=0.9\textwidth]{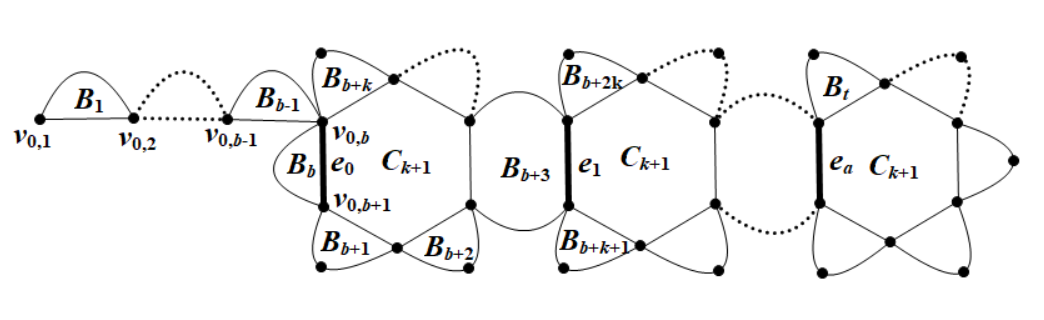}
	\caption{A construction of $G\in\mathscr{B}_k(B_1,\dots,B_t)$. }{\label{figu.5}}
\end{figure}

Let $e_0=v_{0,b}v_{0,b+1}$ and $e_s\in E(\tau(G_s))\setminus\{v_{s,k+1}v_{s,1}\}$ for $s\in \{1,\ldots,a\}$.
We now construct a graph $G$ from $G_0,G_1,\ldots,G_a$
by identifying $e_{s-1}$ and $v_{s,1}v_{s,k+1}$ for each $s\in \{1,\ldots,a\}$ (see Fig. \ref{figu.5}).
We can see that $G$ is a connected $C_k$-free outerplane graph with a decomposition of $k$-blocks $B_1,\dots,B_t$ and
\begin{align}\label{align.10}
|G|=\sum_{s=0}^a|G_s|-2a.
\end{align}
Combining with (\ref{align.8}-\ref{align.10}), we have
\begin{align*}
 |G|=\sum_{i=1}^{t}|B_{i}|-(b-1)-(k-1)a-2a=\sum_{i=1}^{t}|B_i|-\Big\lfloor\frac{t-1}{k}\Big\rfloor-t+1,
\end{align*}
since $a=\lfloor\frac{t-1}{k}\rfloor$ and $b=t-ka$.
\end{proof}

\begin{lem}\label{lemma2.3}
Let $G$ be a 2-connected outerplane graph with an inner face $F^*$.
If each inner face except $F^*$ is a $3$-face, then
$G$ contains $C_l$ for each $l\in \{|F^*|,\dots,|G|\}$.
\end{lem}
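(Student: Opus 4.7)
The plan is to induct on $|G|$. For the base case $|G|=|F^*|$: since $F^*$ is an inner face (hence a cycle by Fact \ref{fact2.1}(i)) using all $|G|$ vertices, $G$ coincides with $F^*=\tau(G)$ and the only required cycle length $|F^*|=|G|$ is realized by $G$ itself. No triangular faces can exist in this case.

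For the inductive step I assume $|G|>|F^*|$, so $G$ contains at least one triangular inner face. I would work with the weak dual $T^*$ of $G$, whose nodes are the inner faces of $G$ with two nodes adjacent iff the corresponding faces share an edge; since $G$ is 2-connected outerplane, $T^*$ is a tree. Because $T^*$ has at least two nodes, it has at least two leaves, and since $F^*$ accounts for at most one of them, some leaf of $T^*$ is a triangular face $T$. This triangle shares exactly one edge (say $uv$) with another inner face $F'$ of $G$, so its other two edges lie on $\tau(G)$ (which is a Hamilton cycle by Fact \ref{fact2.1}(ii)); these two edges meet at a common vertex $w$ whose only neighbors in $G$ are $u$ and $v$, so $d_G(w)=2$.

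Setting $G':=G-w$, the idea is that $G'$ is again a 2-connected outerplane graph: its outer cycle $\tau(G')$ is obtained from $\tau(G)$ by replacing the subpath $uwv$ with the single edge $uv$, and its inner faces are precisely $F^*$ together with the remaining triangles of $G$, one fewer than before. In particular $|G'|=|G|-1\ge|F^*|$, and $G'$ satisfies the hypotheses of the lemma with the same $F^*$. The inductive hypothesis then yields cycles of all lengths in $\{|F^*|,\ldots,|G|-1\}$ inside $G'\subseteq G$, and the Hamilton cycle $\tau(G)$ supplies the last required length $|G|$.

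The hard part is the structural verification for $G'$: namely that removing the degree-$2$ vertex $w$ keeps the graph 2-connected and outerplane, leaves $F^*$ and the other triangular inner faces intact, and converts the edge $uv$ (previously shared by $T$ and $F'$) into an edge lying simultaneously on $\tau(G')$ and on $F'$. This is essentially a bookkeeping exercise on the plane embedding, but it is the only step in the argument requiring genuine care; once it is in place, the induction closes immediately.
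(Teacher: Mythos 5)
Your argument is correct, but it takes a genuinely different route from the paper's. You induct on $|G|$ and peel off an ear: a leaf of the weak dual tree other than $F^*$ is a triangle whose apex $w$ has degree $2$, and deleting $w$ yields a smaller graph satisfying the same hypotheses, with $\tau(G)$ supplying the Hamiltonian length at the end. The paper instead argues by minimal counterexample in the opposite direction: it considers the set $\mathbb{C}$ of cycles containing all of $V(F^*)$, takes the least length $l$ not realized in $\mathbb{C}$, picks an $(l-1)$-cycle $C^*\in\mathbb{C}$, finds an edge $uv\in E(C^*)\setminus E(\tau(G))$ bounding a $3$-face $uvw$ with $w\notin V(C^*)$, and reroutes $C^*$ through $w$ — so it grows cycles outward from $F^*$ rather than shrinking the graph toward it. Your route needs the standard (but nowhere stated in the paper) fact that the weak dual of a $2$-connected outerplane graph is a tree, plus the bookkeeping you flag; the only piece of that bookkeeping worth writing out is that the leaf triangle $T\neq F^*$ shares \emph{exactly one} edge with its unique dual neighbour $F'$ — if it shared two edges meeting at a vertex $v$, then $v$ would have degree $2$ with both incident edges interior, contradicting that $v$ lies on the Hamilton cycle $\tau(G)$ — after which $d_G(w)=2$ and the deletion visibly preserves $2$-connectivity, outerplanarity, and every inner face other than $T$. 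Each approach has one delicate step (yours: the ear-removal verification; the paper's: producing the triangle with apex outside $V(C^*)$), and the paper's version yields the marginally stronger conclusion that all the required cycles can be chosen to contain $V(F^*)$, which your induction does not track but which the lemma does not ask for.
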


\begin{proof}
Let $\mathbb{C}$ be the set of cycles of $G$ that contain all vertices in $V(F^*)$.
Clearly, $\mathbb{C}\neq\varnothing$, since $F^*$ is a cycle.
We prove a stronger result, i.e., $\{C_{|F^*|},\dots,C_{|G|}\}\subseteq\mathbb{C}$.

Suppose to the contrary that $C_l\notin \mathbb{C}$
for some $l\in \{|F^*|,\dots,|G|\}$.
We may assume that $l$ is minimal.
Then $l\ge |F^*|+1$, and there is an $(l-1)$-cycle $C^*$ in $\mathbb{C}$.
Since $G$ is 2-connected, $\tau(G)$ is a Hamilton cycle.
Hence, there is an edge $uv\in E(C^*)\setminus  E(\tau(G))$ as $|C^*|=l-1<|G|$.
Moreover, $uv$ is incident with two inner faces of $G$,
as $uv\not\in  E(\tau(G))$.
Since all inner faces of $G$ except $F^*$ are 3-faces and $V(F^*)\subseteq V(C^*)$,
we can find an inner 3-face $F'$ such that
$ E(F')=\{uv,vw,wu\}$ and $w\not\in V(C^*)$.
Now define $H=G[V(C^*)\cup \{w\}]$.
Then $H\subseteq G$ and $H$ contains an $l$-cycle,
which contradicts the choice of $l$.
\end{proof}

We now definite two graph families.
Let $\mathcal{G}_{n,m,k}$ be the set of connected $C_k$-free outerplane graphs of order $n$ and size $m$
in which each inner face is a $k^-$-face;
and let $\mathcal{G}'_{n,m,k}$ be the set of connected $C_k$-free outerplane graphs of order $n$ and size $m$
in which each inner face is a $3$-face. Clearly, $\mathcal{G}'_{n,m,k}\subseteq \mathcal{G}_{n,m,k}$ for $k\geq4.$

\begin{lem}\label{lemma2.4}
Let $n,k$ be two integers with $n\ge2$ and $k\ge 4$.
If $\mathcal{G}_{n,m,k}$ is non-empty, then $\mathcal{G}'_{n,m,k}$ is also non-empty.
\end{lem}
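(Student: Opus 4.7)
The plan is to produce $G' \in \mathcal{G}'_{n,m,k}$ explicitly by specifying its block decomposition. Graphs in $\mathcal{G}'_{n,m,k}$ have all inner faces being $3$-faces, so each block of such a $G'$ is either $K_2$ or a 2-connected edge-maximal outerplane graph, which by Fact \ref{fact2.1}(iii) and Lemma \ref{lemma2.3} is $C_k$-free exactly when it has at most $k-1$ vertices. Both types of block satisfy $e(B)=2|B|-3$, so if $G'$ has $p'$ blocks with sizes $|B_1|,\ldots,|B_{p'}|$ and we set $b_j=|B_j|-1$, then $\sum b_j = n-1$, $p' = 2n-m-2$, and $b_j \in \{1,\ldots,k-2\}$. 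Hence $G'$ exists iff $n-1$ can be written as a sum of $p'$ positive integers each in $\{1,\ldots,k-2\}$, which is feasible iff
\[
n-1 \le m \le \frac{(2k-5)(n-1)}{k-2}.
\]
When this holds, I realize $G'$ by chaining the corresponding triangulated blocks and bridges along a path of shared vertices, mirroring the construction in Lemma \ref{lemma2.2}.

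The lower bound $m \ge n-1$ is automatic from connectivity of $G$, so the task reduces to proving the upper bound for $G \in \mathcal{G}_{n,m,k}$. I plan to establish the stronger block-level inequality $e(B) \le \frac{(2k-5)(|B|-1)}{k-2}$ for every block $B$ of $G$ and sum over blocks using $\sum_i(|B_i|-1) = n-1$. The $K_2$ case is immediate for $k \ge 4$, and for a 2-connected block the inequality rearranges to $|B| \le (k-2)\,d(B) + (k-1)$, where $d(B) = 2|B|-3-e(B)$ is the face deficit. I would prove this by induction on $|B|$, with the base $|B|=3$ trivial. In the inductive step, the case $d(B)=0$ makes $B$ edge-maximal so Lemma \ref{lemma2.3} forces $|B| \le k-1$; the case $d(B) \ge 1$ exploits an inner face $F^*$ of size $s \ge 4$ whose vertices cut the Hamilton cycle $\tau(B)$ (Fact \ref{fact2.1}(ii)) into $s$ arcs, yielding sub-outerplane graphs $B_1,\ldots,B_s$ to which the inductive bound applies.

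The main obstacle is that the naive decomposition over an arbitrary $F^*$ produces only $|B| \le (k-3)(t-s) + (k-2)\,d(B) + 3(k-2)$, where $t$ counts arcs of length at least $2$; since $t \le s$ is automatic, this exceeds the target by $2k-5$. To close the gap I intend to select $F^*$ with at least three edges on $\tau(B)$, i.e., $s - t \ge 3$, which trims the bound by exactly $3(k-3)$. To guarantee such a face is available when $d(B) \ge 1$, I would analyze the dual tree of inner faces of $B$: its node-weights $x_F = |F| - 2$ lie in $\{1,\ldots,k-3\}$ with total $|B|-2$, and since $C_k$ would appear as the boundary of any union of inner faces whose weights sum to $k-2$, no connected sub-tree can achieve that total. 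This combinatorial restriction either exposes a suitable $F^*$ or forces the dual tree into a rigid shape on which the target inequality can be checked directly. This structural case analysis is the delicate core of the argument.
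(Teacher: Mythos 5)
Your overall reduction is sound and genuinely different from the paper's argument (the paper proceeds by induction on $n$, using Claim~\ref{cl2.1} to isolate one face of size $\ge 4$ whose ``outside'' has at most $k-1$ vertices by Lemma~\ref{lemma2.3}, and then performs a vertex-splitting surgery that creates a cut-vertex while preserving order, size and all hypotheses). Your plan of characterizing the feasible pairs $(n,m)$ for $\mathcal{G}'_{n,m,k}$ and proving the matching block-level bound $|B|\le (k-2)d(B)+(k-1)$ is a viable alternative, and that inequality is in fact true. However, there is a genuine gap at what you yourself call the delicate core: the face $F^*$ you need --- an inner face of size $\ge 4$ with at least three of its edges on $\tau(B)$ --- need not exist when $d(B)\ge 1$. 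Take $k\ge 7$ and let $B$ have outer cycle $v_1av_2v_3bv_4v_1$ with chords $v_1v_2$ and $v_3v_4$: the inner faces are the two triangles $v_1av_2$, $v_3bv_4$ and the $4$-face $v_1v_2v_3v_4$, the longest cycle has length $6$, every inner face is a $4^-$-face, $d(B)=1$, yet every inner face (including the unique face of size $\ge4$) has exactly two edges on the outer cycle. Larger configurations of the same kind exist (e.g.\ $\Delta$--$F_1$--$\Delta$--$F_2$--$\Delta$ chains in the dual tree), so this is not an isolated degenerate case, and the proposed fallback (``a rigid shape on which the target inequality can be checked directly'') is neither delimited nor carried out. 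With only $s-t=2$ your recursion overshoots the target by $1$ per step, so the induction as designed does not close.

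The dual-tree weighting you sketch at the end can actually be turned into a complete, non-inductive proof of the block inequality, which would repair the argument: for each inner face $F$ with $|F|\ge 4$, the all-triangle components of the dual tree adjacent to $F$ contain at most $k-1-|F|$ triangles in total, because connected unions of $F$ with pieces of those components realize boundary cycles of every length from $|F|$ up to $|F|$ plus the number of such triangles, and one of these would be a $C_k$ otherwise. Assigning each triangle-component to an adjacent big face then gives $|B|-2\le \sum_{|F|\ge 4}\bigl((k-1-|F|)+(|F|-2)\bigr)=(k-3)\,b\le (k-3)\,d(B)$, which is stronger than what you need. As written, though, the proposal does not establish its key step.
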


\begin{proof}
We apply the induction on $n$.
If $n=2$, then $\mathcal{G}_{n,m,k}=\mathcal{G}'_{n,m,k}=\{K_2\}$, as desired.
Assume then that $n\ge 3$ and $G \in \mathcal{G}_{n,m,k}$.

If $G$ has cut-vertices,
then there exists an end-block $G_1$,
which contains exactly one cut-vertex $v$ of $G$.
Let $G_2:=G-V(G_1-v)$.
Clearly, $G_i\in \mathcal{G}_{|G_i|,e(G_i),k}$ for $i\in \{1,2\}$.
By the induction hypothesis, there exist two graphs $G_1',G_2'$ with $G_1'\in \mathcal{G}'_{|G_1|,e(G_1),k}$
and $G_2'\in \mathcal{G}'_{|G_2|,e(G_2),k}$.
Let $G^*$ be a graph obtained from $G_1'$ and $G_2'$ by identifying a vertex of $G_1'$ and a vertex of $G_2'$.
Then $|G^*|=|G_1|+|G_2|-1=n$ and $e(G^*)=e(G_1)+e(G_2)=m$.
It follows that $G^*\in \mathcal{G}'_{n,m,k}$.

Now assume that $G$ is 2-connected, and
let $\mathcal{F}_G$ be the set of inner faces of sizes at least 4 in $G$.
If $\mathcal{F}_G=\varnothing$,
then $G\in \mathcal{G}'_{n,m,k}$, as desired.
It remains $\mathcal{F}_G\not=\varnothing$.
Since $n\ge 3$ and $G$ is 2-connected,
by Fact \ref{fact2.1} $\tau(G)$ is a Hamilton cycle.
Let $F$ be an arbitrary inner face of size $s\ge 4$.
Then $ E(F)$ separates $G$ into $s$ outerplane subgraphs $G_1,\dots,G_s$ (see Fig. \ref{figu.3}).
For convenience to distinguish, let $G_{F}(e)$ denote the one containing $e$ for each $e\in E(F)$.

\begin{claim}\label{cl2.1}
There exist $F^*\in\mathcal{F}_G$ and $e^*\in  E(F^*)$
such that $\mathcal{F}_{G_{F^*}(e^*)}=\mathcal{F}_G \setminus\{F^*\}$.
\end{claim}

\begin{proof}
For each $F\in\mathcal{F}_G$ and each $e\in  E(F)$,
we can see that $F\not\in \mathcal{F}_{G_{F}(e)}$ and every inner face of $G_{F}(e)$ is also an inner face of $G$
(see Fig. \ref{figu.3}).
Hence, $\mathcal{F}_{G_{F}(e)}\subseteq\mathcal{F}_{G}\setminus \{F\}$.
Now let $F^*\in\mathcal{F}_G$ and $e^*\in  E(F^*)$ such that
$$|\mathcal{F}_{G_{F^*}(e^*)}|=\max\{|\mathcal{F}_{G_{F}(e)}|:~F\in\mathcal{F}_G ~~\mbox{and}~~e\in  E(F)\}.$$
Then $\mathcal{F}_{G_{F^*}(e^*)}\subseteq\mathcal{F}_{G}\setminus \{F^*\}$.
Next it suffices to show $\mathcal{F}_{G}\setminus \{F^*\}\subseteq\mathcal{F}_{G_{F^*}(e^*)}$.

\begin{figure}[!ht]
	\centering
	\includegraphics[width=0.4\textwidth]{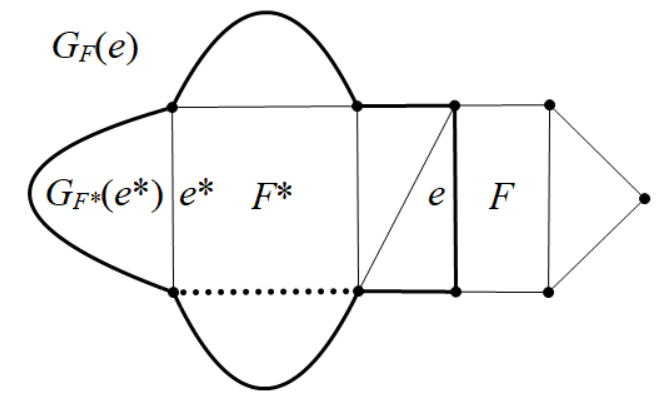}
	\caption{A 2-connected outerplane graph $G$ with $F\in\mathcal{F}_{G}\setminus (\{F^*\}\cup \mathcal{F}_{G_{F^*}(e^*)})$. }{\label{figu.6}}
\end{figure}

Suppose to the contrary that there exists an
$F\in\mathcal{F}_{G}\setminus (\{F^*\}\cup \mathcal{F}_{G_{F^*}(e^*)})$.
Then, there exists an edge $e\in  E(F)$ such that
$\mathcal{F}_{G_{F^*}(e^*)}\cup \{F^*\} \subseteq \mathcal{F}_{G_{F}(e)}$
(where $G_{F}(e)$ is bounded by bond lines, see Fig. \ref{figu.6}),
which contradicts the choices of $F^*$ and $e^*$.
\end{proof}

By Claim \ref{cl2.1},
there exist $F^*\in\mathcal{F}_G$ and $e^*\in  E(F^*)$
such that $\mathcal{F}_{G_{F^*}(e^*)}=\mathcal{F}_G \setminus\{F^*\}$.
Assume that $F^*:=v_1v_2\ldots v_sv_1$ and $e^*=v_1v_2$, where $s\ge 4$.
Let $G_i:=G_{F^*}(v_iv_{i+1})$ for $i\in \{1,\dots,s-1\}$ and $G_s:=G_{F^*}(v_sv_1)$ (see Fig. \ref{figu.4}).

\begin{claim}\label{cl2.2}
Let $G':=G[\cup_{i=2}^sV(G_i)]$. Then $|G'|\le k-1$.
\end{claim}

\begin{figure}[!ht]
	\centering
	\includegraphics[width=0.6\textwidth]{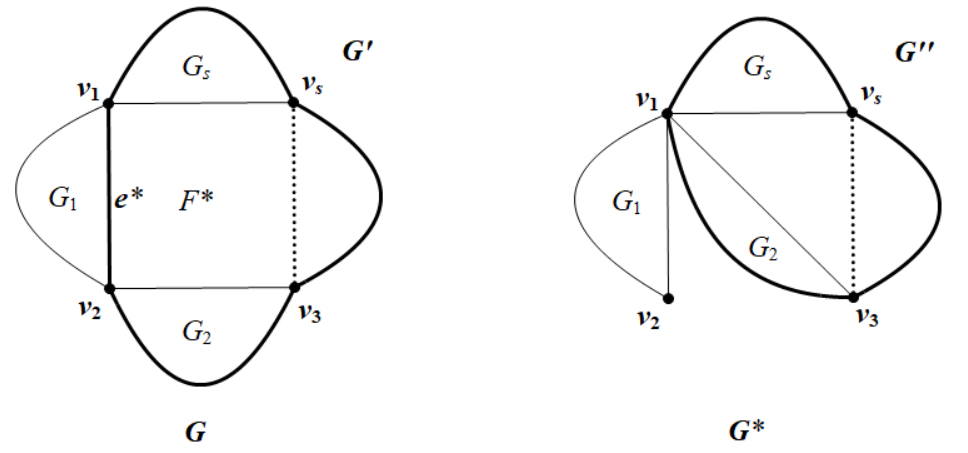}
	\caption{Examples of 2-connected outerplane graphs $G$ and $G^{*}$. }{\label{figu.4}}
\end{figure}

\begin{proof}
In Fig. \ref{figu.4}, $G'$ is bounded by bold lines.
Since $G\in\mathcal{G}_{n,m,k}$, each inner face of $G$ is a $k^-$-face,
and so $|F^*|=s\le k-1$.
By Claim \ref{cl2.1}, $\mathcal{F}_G=\mathcal{F}_{G_1}\cup\{F^*\}$.
Hence, $\mathcal{F}_{G_i}=\varnothing$ for each $i\in\{2,\dots,s\}$.
Thus, all inner faces of $G'$ except $F^*$ are 3-faces.
By Lemma \ref{lemma2.3}, $G'$ contains a cycle $C_l$ for each $l\in\{s,\dots,|G'|\}.$
Consequently, $|G'|\le k-1$, as $G'$ is $C_k$-free.
The claim holds.
\end{proof}


Let $G^{*}:=G-\{v_2v:v\in N_{G_2}(v_2)\}+\{v_1v:v\in N_{G_2}(v_2)\}$ (see Fig. \ref{figu.4}).
Then $G^{*}$ is still an outerplane graph with exactly two blocks $G_1$ and $G''$,
where $G''$ is bounded by bold lines. Clearly, $V(G'')=V(G')\setminus \{v_2\}$,
and by Claim \ref{cl2.2} $|G''|<|G'|\le k-1$.
Consequently, $G''$ is $C_k$-free and every inner face of $G''$ is a $k^-$-face.
Note that $G_1\subseteq G\in\mathcal{G}_{n,m,k}$.
Hence, $G_1$ is $C_k$-free and every inner face of $G_1$ is a $k^-$-face.
Therefore, $G^{*}\in\mathcal{G}_{n,m,k}$.
Now since $G^{*}$ has a cut-vertex $v_1$,
by previous discussion there exists a graph in $\mathcal{G}'_{n,m,k}$.
This completes the proof of Lemma \ref{lemma2.4}.
\end{proof}

We know that $e(G)=2|G|-3$ for every edge-maximal outerplane graph $G$ with $|G|\ge 2$.
It follows that ${\rm ex}_{\mathcal{OP}}(n,C_{k})=2n-3$ for $2\le n\le k-1$.
At the end of this section, we will determine ${\rm ex}_{\mathcal{OP}}(n,C_k)$
for $n\ge k\ge 3$.

\begin{thm}\label{thm2.2}
 Let $n,k,\lambda$ be integers with $n\geq k\ge 3$
 and $\lambda=\lfloor\frac{kn-2k-1}{k^2-2k-1}\rfloor+1$.
 Then ${\rm ex}_{\mathcal{OP}}(n,C_k)=f(n,k)$, where
 $$f(n,k)=\left\{
                                       \begin{array}{ll}
                                         2n-\lambda+2\big\lfloor\frac{\lambda}{k}\big\rfloor-3  & \hbox{$k\mid \lambda$,} \\
                                         2n-\lambda+2\big\lfloor\frac{\lambda}{k}\big\rfloor-2  & \hbox{otherwise.}
                                       \end{array}
                                     \right.
$$
\end{thm}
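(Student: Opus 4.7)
The plan is to prove matching upper and lower bounds for ${\rm ex}_{\mathcal{OP}}(n,C_k)$ by exploiting the $k$-block decomposition of any extremal $C_k$-free outerplane graph. For the upper bound, let $G$ be such an extremal graph on $n$ vertices; I may assume $G$ is connected (otherwise merge components, which cannot create a new $C_k$). Decompose $G$ into its $k$-blocks $B_1,\dots,B_t$. By Fact~\ref{fact2.2}, each $B_i$ is a $C_k$-free trivial $k$-block, so either $B_i\cong K_2$ or $B_i$ is $2$-connected with every inner face of size at most $k-1$. The first key step is to show $|B_i|\le k-1$ for every $i$: this is trivial when $B_i\cong K_2$, and otherwise $B_i\in\mathcal{G}_{|B_i|,e(B_i),k}$, so Lemma~\ref{lemma2.4} yields $B_i'\in\mathcal{G}'_{|B_i|,e(B_i),k}$ with the same parameters but all inner faces triangular. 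A standard edge count via Fact~\ref{fact2.1}(iii), applied to the $2$-connected blocks of $B_i'$, forces $B_i'$ itself to be $2$-connected; then Lemma~\ref{lemma2.3}, applied with any triangle as $F^{*}$, produces a cycle $C_l\subseteq B_i'$ for every $l\in\{3,\dots,|B_i'|\}$, and $C_k$-freeness gives $|B_i|=|B_i'|\le k-1$.

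Next I would count edges. By Fact~\ref{fact2.1}(iii) (with the $K_2$ case checked separately), $e(B_i)=2|B_i|-3$ for each $i$, so $e(G)=2\sum_{i=1}^{t}|B_i|-3t$. Lemma~\ref{lemma2.1} gives $\sum|B_i|\le n+\lfloor(t-1)/k\rfloor+t-1$, while $|B_i|\le k-1$ forces $\sum|B_i|\le(k-1)t$. Combining,
\[
e(G)\le 2\min\bigl\{(k-1)t,\ n+\lfloor(t-1)/k\rfloor+t-1\bigr\}-3t.
\]
The right-hand side equals $(2k-5)t$ in the regime where the block-size constraint binds (increasing in $t$), and $2n-t+2\lfloor(t-1)/k\rfloor-2$ in the regime where Lemma~\ref{lemma2.1} binds (decreasing in $t$ except for $+1$ jumps each time $t$ crosses a multiple of $k$). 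A direct arithmetic check using the definition of $\lambda$ shows the global maximum is attained at $t=\lambda$ when $k\nmid\lambda$ and at $t=\lambda-1$ when $k\mid\lambda$, and in both cases this maximum equals $f(n,k)$.

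For the matching lower bound, I would invoke Lemma~\ref{lemma2.2} with $t$ equal to the optimal value above and the blocks $B_1,\dots,B_t$ chosen as $C_k$-free trivial $k$-blocks of maximum size: each $B_i$ is a triangulated $(k-1)$-gon (so $|B_i|=k-1$ and $e(B_i)=2k-5$), possibly with a single block shrunk to a triangulated $r$-gon for some $3\le r\le k-1$ so that $\sum|B_i|=n+\lfloor(t-1)/k\rfloor+t-1$. Lemma~\ref{lemma2.2} then produces a graph $G\in\mathscr{B}_k(B_1,\dots,B_t)$ on exactly $n$ vertices, $C_k$-free with $e(G)=2\sum|B_i|-3t=f(n,k)$. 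The main obstacle will be the arithmetic bookkeeping: verifying that the piecewise optimization yields exactly $f(n,k)$ in both cases $k\mid\lambda$ and $k\nmid\lambda$, and checking that the block sizes can be tuned so that the vertex count lands precisely at $n$ while every $B_i$ stays $C_k$-free.
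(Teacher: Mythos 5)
Your overall architecture (lower bound via Lemma~\ref{lemma2.2} with triangulated $(k-1)$-gon blocks, upper bound via the $k$-block decomposition and an optimization over the number of blocks $t$) is the paper's, and your min--max formulation $e(G)\le 2\min\{(k-1)t,\ n+\lfloor(t-1)/k\rfloor+t-1\}-3t$ is a workable repackaging of its final computation. But there is a genuine gap in the step that makes the whole upper bound run: your claimed proof that $|B_i|\le k-1$. Lemma~\ref{lemma2.4} only guarantees a graph $B_i'\in\mathcal{G}'_{|B_i|,e(B_i),k}$ with the same \emph{order and size}; it does not preserve $2$-connectedness, and no edge count forces it. Indeed, a connected outerplane graph with $c$ blocks satisfies only $e\le 2n-2-c$, so your argument would force $c=1$ only when $e(B_i)=2|B_i|-3$, i.e.\ when $B_i$ is already a triangulation and there was nothing to prove. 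A trivial $k$-block such as (for $k=5$) a $6$-cycle with the single chord $v_1v_4$ is $2$-connected, $C_5$-free, has all inner faces of size at most $4$, yet has only $7<2\cdot 6-3$ edges; Lemma~\ref{lemma2.4} can return for it a graph with cut vertices (two triangles and a pendant edge), to which Lemma~\ref{lemma2.3} does not apply. This example also shows that your later assertion $e(B_i)=2|B_i|-3$ fails for the blocks of an arbitrary extremal graph, and that a trivial $k$-block can have more than $k-1$ vertices, so the constraint $\sum|B_i|\le(k-1)t$ --- which is what makes your $g_1$ branch bind and pushes $t$ up to $\lambda$ --- is not available.

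The paper closes exactly this gap with a global, not local, argument: it first proves strict monotonicity of ${\rm ex}_{\mathcal{OP}}(n,C_k)$ in $n$ (Claim~\ref{cl2.4}) and the exact identity $n=\sum|B_i^*|-\lfloor(t-1)/k\rfloor-t+1$ (Claim~\ref{cl2.5}); then, in Claim~\ref{cl2.6}, it re-decomposes each $G_i\in\mathcal{G}'_{|B_i^*|,e(B_i^*),k}$ into \emph{its own} $k$-blocks $B_{i1},\dots,B_{is_i}$ (all with triangular inner faces), reassembles the entire collection via Lemma~\ref{lemma2.2} into a graph $G^{**}$ with $e(G^{**})=e(G^*)$ and $|G^{**}|\le n$, and uses monotonicity to conclude $|G^{**}|=n$, so that $G^{**}$ is a \emph{new} extremal graph whose blocks genuinely satisfy $|B|\le k-1$ and $e(B)=2|B|-3$. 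You need this replace-the-whole-graph step (or an equivalent); the in-place swap $B_i\mapsto B_i'$ cannot be justified. The lower-bound half of your plan is essentially the paper's Claim~\ref{cl2.3}, though note that when $k\mid\lambda$ the extra block must be allowed to be a $K_2$ (order $2$), not a triangulated $r$-gon with $r\ge 3$.
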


\begin{proof}
Let $G^*$ be a $C_k$-free outerplane graph of order $n$ with ${\rm ex}_{\mathcal{OP}}(n,C_k)$ edges.
Clearly, $G^*$ is connected.
Now we give some claims.

\begin{claim}\label{cl2.3}
$e(G^*)\geq f(n,k).$
\end{claim}

\begin{proof}
Let $\mathbb{Z}$ be the set of integers and
\begin{align}\label{align111}
S=\{s\in \mathbb{Z}: n+\Big\lfloor\frac{s-1}{k}\Big\rfloor+s-1\le s(k-1)\}.
\end{align}
Then for each $s\in  \mathbb{Z}$, $s\in S$ if and only if $\frac{s-1}{k}
<s(k-2)-n+2.$
It follows that
\begin{align}\label{align1111}
\min_{s\in S}s=\Big\lfloor\frac{kn-2k-1}{k^2-2k-1}\Big\rfloor+1=\lambda.
\end{align}
On the one hand, since $\lambda\in S$, we have
$n+\big\lfloor\frac{\lambda-1}{k}\big\rfloor+\lambda-1\le \lambda(k-1).$
Hence,
\begin{align}\label{align.11}
n+\Big\lfloor\frac{\lambda-1}{k}\Big\rfloor-(\lambda-1)(k-2)\le k-1.
\end{align}
On the other hand, since $\lambda-1\notin S$, we have
$n+\lfloor\frac{\lambda-2}{k}\rfloor+\lambda-2>(\lambda-1)(k-1)$.
Note that
$\lfloor\frac{\lambda-1}{k}\rfloor\geq\lfloor\frac{\lambda-2}{k}\rfloor.$
We can see that
\begin{align}\label{align.12}
n+\Big\lfloor\frac{\lambda-1}{k}\Big\rfloor-(\lambda-1)(k-2)>1.
\end{align}
Now we define a sequence of graphs as follows.
Let $B_i$
be the graph obtained by joining a new vertex with all vertices of a path of order $|B_i|-1$,
where $|B_i|=k-1$ for $i\in\{1,\dots,\lambda-1\}$
and $|B_{\lambda}|=n+\lfloor\frac{{\lambda}-1}{k}\rfloor-({\lambda}-1)(k-2)$.
Note that $k\geq3$. Combining with (\ref{align.11}-\ref{align.12}), we have
$2\leq|B_i|\leq k-1$ for each $i\in \{1,\ldots,\lambda\}$.
Consequently, every $B_i$ is either a copy of $K_2$
or a 2-connected $C_k$-free outerplane graph with each inner face a $3$-face.
This implies that every $B_i$ is a $C_k$-free trivial $k$-block.
By Lemma \ref{lemma2.2}, there exists a graph $G\in\mathscr{B}_k(B_1,\dots,B_\lambda)$
with minimal order, that is,
\begin{align}\label{align.13}
|G|=\sum_{i=1}^{\lambda}|B_i|-\Big\lfloor\frac{\lambda-1}{k}\Big\rfloor-\lambda+1.
\end{align}
It follows that $|G|=n$. Moreover,
by the definition of $B_i$,
we have $e(B_i)=2|B_i|-3$ for each $i\in\{1,\dots,\lambda\}$.
Combining with (\ref{align.13}), we have
\begin{align}\label{align.14}
e(G)=\sum_{i=1}^{\lambda}(2|B_i|-3)=2\Big(n+\Big\lfloor\frac{\lambda-1}{k}\Big\rfloor+\lambda-1\Big)-3\lambda.
\end{align}
If $k\nmid \lambda$, then $\lfloor\frac{\lambda-1}{k}\rfloor=\lfloor\frac{\lambda}{k}\rfloor,$
and so $e(G)=2n-\lambda+2\lfloor\frac{\lambda}{k}\rfloor-2.$
If $k\mid \lambda$, then we define $B_{\lambda+1}\cong K_2$.
By Lemma \ref{lemma2.2}, we can similarly find a graph $G'\in\mathscr{B}_k(B_1,\dots,B_{\lambda+1})$
with $|G'|=|G|=n$. However,
$e(G')=e(G)+1$.
Since now $\lfloor\frac{\lambda-1}{k}\rfloor=\lfloor\frac{\lambda}{k}\rfloor-1,$
by (\ref{align.14}) we have $e(G')=e(G)+1=2n-\lambda+2\lfloor\frac{\lambda}{k}\rfloor-3.$
The choices of $G$ and $G'$ imply that they are both $C_k$-free outerplane graphs of order $n$.
Therefore, above discussions give that $e(G^*)\geq f(n,k).$
\end{proof}

Having Claim \ref{cl2.3}, in the following it suffices to show $e(G^*)\leq f(n,k).$

\begin{claim}\label{cl2.4}
If $n_1<n_2$, then ${\rm ex}_{\mathcal{OP}}(n_1,C_k)<{\rm ex}_{\mathcal{OP}}(n_2,C_k)$.
\end{claim}

\begin{proof}
Let $G'$ be a $C_k$-free outerplane graph of order $n_1$ and size ${\rm ex}_{\mathcal{OP}}(n_1,C_k)$.
Let $G''$ be a graph obtained from $G'$ by joining a vertex of $G'$ to $n_2-n_1$ isolated vertices.
We can see that $G''$ is a $C_k$-free outerplane graph of order $n_2$.
Therefore, ${\rm ex}_{\mathcal{OP}}(n_1,C_k)<e(G'')\le {\rm ex}_{\mathcal{OP}}(n_2,C_k),$
as desired.
\end{proof}

Now let $\{B_1^*,\dots,B_t^*\}$ be the $k$-blocks decomposition of $G^*$.

\begin{claim}\label{cl2.5}
 $n=\sum_{i=1}^{t}|B_i^*|-\lfloor\frac{t-1}{k}\rfloor-t+1,$ where $n=|G^*|$.
\end{claim}

\begin{proof}
Set $n':=\sum_{i=1}^{t}|B_i^*|-\lfloor\frac{t-1}{k}\rfloor-t+1$.
By Lemma \ref{lemma2.1}, we have $n=|G^*|\geq n'$;
and by Lemma \ref{lemma2.2}, there exists a graph $G\in\mathscr{B}_k(B_1^*,\dots,B_t^*)$
with $|G|=n'$.
Since $G^*$ and $G$ have the same $k$-blocks decomposition,
$e(G)=\sum_{i=1}^{t}e(B_{i}^*)=e(G^*).$
Now if $n'<n$, then by Claim \ref{cl2.4},
$e(G)\leq{\rm ex}_{\mathcal{OP}}(n',C_k)<{\rm ex}_{\mathcal{OP}}(n,C_k)=e(G^*)$,
a contradiction.
Therefore, $n=n'$, as required.
\end{proof}

\begin{claim}\label{cl2.6}
There exists an extremal graph $G^{*}$
such that 
for each $k$-block its inner faces (if exist) are all 3-faces.
\end{claim}

\begin{proof}
Recall that every $k$-block is $K_2$ or a 2-connected subgraph of $G^*$ with each inner face a $k^-$-face.
Thus, $B_i^*\in \mathcal{G}_{|B_i^*|,e(B_i^*),k}$ for each $i\in \{1,\dots,t\}$.
If $k=3$, then every $k$-block $B_i^*\cong K_2$ and has no inner faces.
Then, $G^{*}$ itself is a desired graph.

It remains to consider $k\ge 4$.
By Lemma \ref{lemma2.4} for each $i\in \{1,\dots,t\}$ there exists a graph $G_i\in \mathcal{G}'_{|B_i^*|,e(B_i^*),k}$;
furthermore, $G_i$ admits a decomposition of $s_i$ $k$-blocks,
say $B_{i1},\ldots,B_{is_i}$, for some $s_i\ge 1$.
By the definition of $\mathcal{G}'_{|B_i^*|,e(B_i^*),k}$, every inner face of $G_i$ is a 3-face.
Thus, every inner face of $B_{ij}$ is also a 3-face by Fact \ref{fact2.2} (ii).
Moreover, by Lemma \ref{lemma2.1} we have
\begin{align}\label{align.16}
 |B_i^*|=|G_i|\ge \sum_{j=1}^{s_i}|B_{ij}|-\Big\lfloor\frac{s_i-1}{k}\Big\rfloor-s_i+1
\end{align}
for $i\in \{1,\dots,t\}$.
Combining with Claim \ref{cl2.5} and (\ref{align.16}), we have
\begin{align}\label{align.17}
 n\ge \!\Big(\sum_{i=1}^{t}\sum_{j=1}^{s_i}|B_{ij}|\!-
 \sum_{i=1}^{t}\Big\lfloor\frac{s_i-1}{k}\Big\rfloor
 \!-\!\sum_{i=1}^{t}s_i\!+\!t\Big)\!-\!\Big\lfloor\frac{t-1}{k}\Big\rfloor \!-\!t\!+\!1,
\end{align}
where
\begin{align}\label{align.18}
\sum_{i=1}^{t}\Big\lfloor\frac{s_i-1}{k}\Big\rfloor\le \Big\lfloor\sum_{i=1}^{t}\frac{s_i-1}{k}\Big\rfloor
\le \Big\lfloor\frac{\sum_{i=1}^{t}s_i-1}{k}\Big\rfloor-\Big\lfloor\frac{t-1}{k}\Big\rfloor.
\end{align}
Now let $G^{**}$ be an extremal graph with minimum number of vertices over all graphs in $\mathscr{B}_k(B_{11},\dots,B_{1s_1},\dots,B_{t1},\dots,B_{ts_t})$.
By Lemma \ref{lemma2.2}, we have
\begin{align}\label{align.19}
 |G^{**}|=\sum_{i=1}^{t}\sum_{j=1}^{s_i}|B_{ij}|-\Big\lfloor\frac{\sum_{i=1}^{t}s_i-1}{k}\Big\rfloor-\sum_{i=1}^{t}s_i+1.
\end{align}
Combining with (\ref{align.17}-\ref{align.19}), we have $|G^{**}|\le n$.
Moreover,
\begin{align}\label{align19}
e(G^{**})=\sum_{i=1}^{t}\sum_{j=1}^{s_i}e(B_{ij})=\sum_{i=1}^{t}e(G_i)=\sum_{i=1}^{t}e(B_i^*)=e(G^*).
\end{align}
If $|G^{**}|<n$, then we similarly have $e(G^{**})<e(G^*)$ by Claim \ref{cl2.4},
which contradicts (\ref{align19}).
Therefore, $|G^{**}|=n$, and so $G^{**}$ is a desired graph.
\end{proof}

Now we give the final proof of Theorem \ref{thm2.2}.
By Claim \ref{cl2.6}, we may assume that $G^*$ is an extremal graph with a decomposition of $k$-blocks
$B_1^*,\dots,B_t^*$ such that for each $B_i^*$ its inner faces (if exist) are all 3-faces.
We will see that $|B_{i}^*|\le k-1$ for each $i\in\{1,\ldots,t\}$.
If $B_i^*\cong K_2$, then we are done.
If $B_i^*\ncong K_2$, then by Lemma \ref{lemma2.3},
$B_{i}^*$ contains a cycle $C_l$ for each $l\in \{3,\dots,|B_{i}^*|\}$.
Consequently, $|B_{i}^*|\le k-1$, as $B_{i}^*$ is $C_k$-free.
Combining with Claim \ref{cl2.5}, we have
$n+\Big\lfloor\frac{t-1}{k}\Big\rfloor+t-1
=\sum_{i=1}^{t}|B_{i}^*|\le (k-1)t$.
By (\ref{align111}) and (\ref{align1111}), we have $t\in S$ and $t\ge \lambda$.

If $B_i^*\cong K_2$, then $e(B_{i}^*)=2|B_{i}^*|-3=1$.
If $B_i^*\ncong K_2$, we also have $e(B_{i}^*)=2|B_{i}^*|-3$,
as every inner face of $B_i^*$ is a 3-face.
It follows that $e(G^{*})=\sum_{i=1}^{t}e(B_{i}^*)=2\sum_{i=1}^{t}|B_{i}^*|-3t$.
Combining with Claim \ref{cl2.5}, we have
\begin{align*}
e(G^{*})=2n-t+2\Big\lfloor\frac{t-1}{k}\Big\rfloor-2.
\end{align*}

Let $f(t)=-t+2\lfloor\frac{t-1}{k}\rfloor$.
Then $f(t+1)-f(t)$ equals to $1$ if $k\mid t$, and -1 otherwise.
Since $t\ge \lambda$, we have
$f(t)\leq f(\lambda+1)$ for $k\mid\lambda$, and $f(t)\leq f(\lambda)$ otherwise.
Note that $\lfloor\frac{\lambda-1}{k}\rfloor=\lfloor\frac{\lambda}{k}\rfloor$ for $k\nmid\lambda$.
We can see that $e(G^*)\leq f(n,k)$.
Combining with Claim \ref{cl2.3}, we complete the proof of Theorem \ref{thm2.2}.
\end{proof}

\section{ Outerplanar Tur\'{a}n number of paths}

~~~~In this section we consider the outerplanar Tur\'{a}n number  ${\rm ex}_{\mathcal{OP}}(n,P_k)$
for any two positive integers $n$ and $k$ with $n\ge k\ge 3$.
Clearly, ${\rm ex}_{\mathcal{OP}}(n,P_3)=\lfloor\frac{n}{2}\rfloor$, and the extremal graph is the union of
$\lfloor\frac{n}{2}\rfloor$ isolated edges and at most one isolated vertex.
In the following we focus on $k\ge 4$.
We first introduce some notations.

Let $l(G)$ be the length of a longest path in a graph $G$.
Given $t$ integers $n_1,\dots,n_t$ with $n_1\ge \cdots \ge n_t\ge 2$,
a \emph{t-cactus} $C(n_1,\dots,n_t)$,
is obtained from $t$ maximal outerplanar graphs of orders $n_1,\dots,n_t$, respectively,
by sharing a vertex.
In particular, a 1-cactus is a maximal outerplanar graph.
By Fact \ref{fact2.1}, we have
\begin{align}\label{a20}
l(C(n_1,n_2,\dots,n_t))=\max_{1\leq i<j\leq t}(n_i+n_j-2)=n_1+n_2-2
\end{align}
for $t\ge 2$ and every $t$-cactus $C(n_1,\dots,n_t)$.
Denoted by ${\rm ex}'_{\mathcal{OP}}(n,P_k)$
the maximum number of edges in a connected $P_k$-free outerplanar graph on $n$ vertices.
Now we have the following lemma.

\begin{lem}\label{t31}
 Let $n$ and $k$ be two positive integers with $n\geq k\geq4$.
 Then
$${\rm ex}'_{\mathcal{OP}}(n,P_k)=2n-\Big\lceil\frac{n-k+1}{\lfloor\frac{k}{2}\rfloor-1}\Big\rceil-4. $$
\end{lem}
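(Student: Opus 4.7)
The plan is to establish both directions of the equality. Throughout, let $d=\lfloor k/2\rfloor-1$ and $t^{\ast}=2+\lceil(n-k+1)/d\rceil$, so the claim reduces to ${\rm ex}'_{\mathcal{OP}}(n,P_k)=2n-t^{\ast}-2$.

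For the lower bound I would exhibit a $t^{\ast}$-cactus $G=C(n_1,\ldots,n_{t^{\ast}})$ with $n_1\geq\cdots\geq n_{t^{\ast}}\geq 2$, $n_1+n_2\leq k$, and $\sum_i n_i=n+t^{\ast}-1$; by~(\ref{a20}) such a $G$ is $P_k$-free. A concrete choice is: for $k$ even, take all pieces of size $d+1=\lfloor k/2\rfloor$ except possibly one smaller piece to match the vertex count; for $k$ odd, take one piece of size $d+2=\lceil k/2\rceil$, the rest of size $d+1$, and possibly one smaller adjustment piece. Since each piece is maximal outerplanar, $e(G)=\sum_i(2n_i-3)=2(n+t^{\ast}-1)-3t^{\ast}=2n-t^{\ast}-2$, as required.

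For the upper bound, let $G$ be an extremal connected $P_k$-free outerplanar graph on $n$ vertices with block decomposition $B_1,\ldots,B_b$. Without loss of generality every block is edge-maximal outerplanar, since adding a chord inside a 2-connected outerplanar block cannot create a new $P_k$: the longest path within any such block is bounded by $|B|$, independent of its chord structure. Then $e(B_i)=2|B_i|-3$, and $\sum_i|B_i|=n+b-1$ gives $e(G)=2n-b-2$, so it remains to show $b\geq t^{\ast}$. The core $P_k$-free constraint is pairwise: whenever blocks $B_i,B_j$ share a cut vertex $v$, Fact~\ref{fact2.1}(ii) provides a Hamilton path in each block ending or starting at $v$ (delete an outer-face edge incident to $v$ from the outer Hamilton cycle), and concatenating these yields a path on $|B_i|+|B_j|-1$ vertices in $G$, forcing $|B_i|+|B_j|\leq k$.

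The main obstacle is turning this pairwise constraint into a sharp lower bound on $b$ when the block-cut tree of $G$ is not a single star. I would argue that chain constraints along paths of three or more blocks in the block-cut tree (derived from analogous concatenations through intermediate blocks) are strictly more restrictive per block than the pairwise constraint, so the minimum of $b$ over all valid configurations is attained in a star block-cut tree where all blocks share a single cut vertex and only pairwise constraints apply. For a star, a direct optimization (case-split by parity of $k$) shows that the maximum of $\sum_i(|B_i|-1)$ with $b$ blocks subject to $|B_i|+|B_j|\leq k$ for all $i\neq j$ is $bd$ when $k$ is even (all blocks of size $d+1$) and $bd+1$ when $k$ is odd (one block of size $d+2$, the rest of size $d+1$). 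Since $\sum_i(|B_i|-1)=n-1$ must hold, this yields $b\geq\lceil(n-1)/d\rceil$ for $k$ even and $b\geq\lceil(n-2)/d\rceil$ for $k$ odd, both of which simplify to $b\geq t^{\ast}$. Combined with $e(G)=2n-b-2$, this matches the lower bound and completes the proof.
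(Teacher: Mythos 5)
Your lower-bound construction and the closing arithmetic match the paper, but the upper bound has a genuine gap at exactly the point you flag as ``the main obstacle''. The pairwise constraint $|B_i|+|B_j|\le k$ is established only for blocks \emph{sharing a cut vertex}, and that constraint set is too weak to force $b\ge t^{\ast}$: a chain of three blocks $B_1$--$K_2$--$B_3$ with $|B_1|=|B_3|=k-2$ satisfies every adjacent-pair constraint, yet concatenating a Hamilton path of $B_1$ ending at one cut vertex, the middle edge, and a Hamilton path of $B_3$ starting at the other cut vertex produces a path on $2k-4\ge k$ vertices. So the star block-cut tree is not where the minimum of $b$ under your stated constraints is attained; rather, those constraints admit graphs that are not $P_k$-free, and the proposed repair via ``chain constraints'' is only asserted, not carried out (and is delicate, because a path traversing an intermediate block picks up all of its vertices only when that block has a Hamilton path between the two relevant cut vertices, which maximal outerplanar graphs need not have). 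The paper avoids all of this with a single observation: for \emph{any} two blocks $G_1,G_2$, not just adjacent ones, one can take a path $P$ in $G^*$ joining them and extend it at both ends by Hamilton paths of $G_1$ and $G_2$ (which exist from any prescribed endpoint, by deleting an edge of the outer Hamilton cycle), so $l(G^*)\ge n_1+n_2-2$ already for the two \emph{largest} blocks. Hence $n_1+n_2\le k$ and $n_i\le\lfloor\frac{k}{2}\rfloor$ for $i\ge2$ hold regardless of the shape of the block tree, and $n+t-1=\sum_i n_i\le k+(t-2)\lfloor\frac{k}{2}\rfloor$ gives $t\ge t^{\ast}$ directly; the paper then compares $G^*$ with the cactus $C(n_1,\dots,n_t)$ rather than edge-maximalizing blocks in place.

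A secondary issue: your ``without loss of generality every block is edge-maximal'' is not supported by the reason you give. Adding chords inside a block can create a Hamilton path between a pair of cut vertices where none existed, thereby lengthening paths that pass \emph{through} that block, so edge-maximalizing can create a new $P_k$. For the upper bound this is harmless --- you only need $e(B_i)\le 2|B_i|-3$, which holds for every block of an outerplanar graph --- but as stated the reduction is false and should be dropped rather than repaired.
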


\begin{proof}
Let $G^*$ be a connected $P_{k}$-free outerplanar graph of order $n$ with maximal size.
Assume that $G^*$ has $t$ blocks $G_1,\dots,G_t$,
where $|G_i|=n_i$ for $i\in \{1,\ldots,t\}$ and
\begin{align}\label{a23}
n_1\ge \cdots \ge n_t\geq2.
\end{align}
If $t=1$, then $G^*$ itself is 2-connected.
By Fact \ref{fact2.1}, $G^*$ has a Hamilton cycle, and thus $G^*$ contains a copy of $P_k$,
which contradicts the choice of $G^*$.
Thus
\begin{align}\label{a25}
t\ge 2~~~\mbox{and}~~~\sum_{i=1}^t(n_i-1)=n-1.
\end{align}
Since every block of $C(n_1,\dots,n_t)$ is a maximal outerplanar graph, by (\ref{a25}) we have
\begin{align}\label{a27}
e\left(C(n_1,\dots,n_t)\right)=\sum_{i=1}^{t}(2n_i-3)=2\sum_{i=1}^{t}(n_i-1)-t=2n-t-2
\end{align}
for every $t$-cactus $C(n_1,\dots,n_t)$.
Furthermore, we have
\begin{align}\label{ali18}
e(G^*)=\sum_{i=1}^{t}e(G_i)\le \sum_{i=1}^{t}(2n_i-3)=e(C(n_1,\dots,n_t)).
\end{align}
Let $P$ be a path connecting $G_1$ and $G_2$ in $G$.
Then $l(G^*)\geq l(G_1)+l(P)+l(G_2)$.
By Fact \ref{fact2.1}, $l(G_i)=n_i-1$ for each $i\in \{1,\dots,t\}$.
Thus, $l(G^*)\geq  n_1+n_2-2$.
Combining with (\ref{a20}), we have $l(C(n_1,\dots,n_t))\leq l(G^*)$.
Since $G^*$ is $P_{k}$-free, $C(n_1,\dots,n_t)$ is also $P_{k}$-free.
By (\ref{a27}) and (\ref{ali18}), it suffices to determine the minimum $t$
such that we can find an $n$-vertex $P_{k}$-free $t$-cactus $C(n_1,\dots,n_t)$.
More precisely, we shall determine the minimum $t$ such that $n_1+n_2-2\le k-2$
and (\ref{a23}-\ref{a25}) hold.

It follows that
\begin{align}\label{a22}
n_1+n_2\le k ~~\mbox{and}~~n_i\le \Big\lfloor\frac{k}{2}\Big\rfloor
\end{align}
for $2\leq i\leq t.$
Furthermore, combining with (\ref{a25}) and (\ref{a22}) we have
$$n+t-1=\sum_{i=1}^tn_i=(n_1+n_2)+\sum_{i=3}^{t}n_i\le k+(t-2)\Big\lfloor\frac{k}{2}\Big\rfloor.$$
This implies that
\begin{align}\label{al2}
\min t=\Big\lceil\frac{n-k+2\lfloor\frac{k}{2}\rfloor-1}{\lfloor\frac{k}{2}\rfloor-1}\Big\rceil=
\frac{n-k+2\lfloor\frac{k}{2}\rfloor-1}{\lfloor\frac{k}{2}\rfloor-1}+\alpha,
\end{align}
where $\alpha\in [0,1)$ and $\alpha(\lfloor\frac{k}{2}\rfloor-1)$ is an integer.
Therefore, $0\leq\alpha(\lfloor\frac{k}{2}\rfloor-1)\leq \lfloor\frac{k}{2}\rfloor-2$ and hence
\begin{align}\label{al3}
2\leq\Big\lfloor\frac{k}{2}\Big\rfloor-\alpha\Big(\Big\lfloor\frac{k}{2}\Big\rfloor-1\Big)
\leq\Big\lfloor\frac{k}{2}\Big\rfloor.
\end{align}
Now set $n_1=\Big\lceil\frac{k}{2}\Big\rceil$, $n_i=\lfloor\frac{k}{2}\rfloor$ for $2\leq i\leq \min t-1$
and $n_{\min t}=\lfloor\frac{k}{2}\rfloor-\alpha(\lfloor\frac{k}{2}\rfloor-1).$
By (\ref{al3}), the choices of $n_1,n_2,\ldots,n_{\min t}$ satisfy (\ref{a23}) and (\ref{a22}).
Furthermore, we will see that the choices also satisfy (\ref{a25}).

On the one hand, by (\ref{al2}) we have
$$
\min t\ge
\frac{n-k+2\lfloor\frac{k}{2}\rfloor-1}{\lfloor\frac{k}{2}\rfloor-1}
=\frac{n-k+1}{\lfloor\frac{k}{2}\rfloor-1}+2>2,
$$
as $n\ge k$ and $k\ge 4$.
On the other hand,
\begin{eqnarray*}\label{a26}
\sum_{i=1}^{\min t}(n_i-1)
  &=& (n_1+n_2)+\sum_{i=3}^{\min t-1}n_i+n_{\min t}-\min t\nonumber\\
  &=& k+(\min t-3)\Big\lfloor\frac{k}{2}\Big\rfloor+\Big(\Big\lfloor\frac{k}{2}\Big\rfloor-
  \alpha\Big(\Big\lfloor\frac{k}{2}\Big\rfloor-1\Big)\Big)-\min t \nonumber\\
  &=&  \Big(k-2\Big\lfloor\frac{k}{2}\Big\rfloor\Big)+(\min t-\alpha)\Big(\Big\lfloor\frac{k}{2}\Big\rfloor-1\Big) \nonumber\\
  &=&  n-1,
\end{eqnarray*}
as by (\ref{al2}) we have $(\min t-\alpha)\left(\lfloor\frac{k}{2}\rfloor-1\right)=n-k+2\lfloor\frac{k}{2}\rfloor-1$.

By above discussions, we find an $n$-vertex $P_k$-free $(\min t)$-cactus $C(n_1,\dots,n_{\min t})$,
where $n_1=\lceil\frac{k}{2}\rceil$, $n_i=\lfloor\frac{k}{2}\rfloor$ for $2\leq i\leq \min t-1$
and $n_{\min t}=\lfloor\frac{k}{2}\rfloor-\alpha(\lfloor\frac{k}{2}\rfloor-1).$
It follows that ${\rm ex}'_{\mathcal{OP}}(n,P_{k})=
e(C(n_1,\dots,n_{\min t}))$.
Combining with (\ref{a27}) and (\ref{al2}),
$${\rm ex}'_{\mathcal{OP}}(n,P_{k})=
e(C(n_1,\dots,n_{\min t}))
=2n-{\min t}-2
=2n-\Big\lceil\frac{n-k+1}{\lfloor\frac{k}{2}\rfloor-1}\Big\rceil-4,$$
as desired.
\end{proof}

Now let ${\rm ex}''_{\mathcal{OP}}(n,P_k)$ be the maximum number of edges in a $P_k$-free outerplanar graph on $n$ vertices in which every component is of order at most $k-1$.

\begin{lem}\label{t32}
Let $n$ and $k$ be two positive integers with $n\ge k\ge 4$. Then
 $${\rm ex}''_{\mathcal{OP}}(n,P_k)=\left\{
                                       \begin{array}{ll}
                                         2n-3\lceil\frac{n}{k-1}\rceil+1  & \hbox{if $n\equiv1\pmod{k-1}$,} \\
                                          2n-3\lceil\frac{n}{k-1}\rceil  & \hbox{otherwise.}
                                       \end{array}
                                     \right.
$$
\end{lem}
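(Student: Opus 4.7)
The plan is to reduce the problem to a simple partition optimization and then check cases. Let $G$ be an extremal graph for ${\rm ex}''_{\mathcal{OP}}(n,P_k)$, and let its components be $G_1,\dots,G_t$ with orders $m_1,\dots,m_t\le k-1$. By Fact~\ref{fact2.1}(iii), each $G_i$ with $m_i\ge 2$ satisfies $e(G_i)\le 2m_i-3$, with equality when $G_i$ is a maximal outerplanar graph on $m_i$ vertices; and $e(G_i)=0$ if $m_i=1$. Since every component has fewer than $k$ vertices, the $P_k$-freeness condition is automatic. Consequently, the extremal value equals the maximum of $\sum_i \max(2m_i-3,0)$ over integer partitions $m_1+\cdots+m_t=n$ with $1\le m_i\le k-1$, and the maximum is achieved by taking each non-trivial component to be maximal outerplanar.

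Let $a$ denote the number of parts equal to $1$ (isolated vertices) and $c$ the number of parts $\ge 2$ (non-trivial components). Then $e(G)=2(n-a)-3c$, so the task becomes
\[
\min\{2a+3c \colon a,c\in\mathbb{Z}_{\ge 0},\ c\ge \lceil (n-a)/(k-1)\rceil\}.
\]
I would carry out the minimization by writing $n=q(k-1)+r$ with $0\le r\le k-2$ and doing a small trade-off analysis in three cases. Since raising $a$ by $k-1$ lets $c$ drop by only $1$ (a net cost of $2(k-1)-3=2k-5\ge 3>0$), $a$ should stay small; and raising $a$ within a single residue class of $(k-1)$ does not decrease $c$, so $a$ should be as small as possible consistent with making $n-a$ divisible by $k-1$ or one short. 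A brief check shows the optimum occurs at $(a,c)=(0,q)$ when $r=0$, at $(a,c)=(0,q+1)$ when $r\ge 2$, and at $(a,c)=(1,q)$ when $r=1$. Plugging back into $2n-2a-3c$ yields exactly the two-case formula.

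The construction (matching the upper bound) is immediate: take $q$ vertex-disjoint maximal outerplanar graphs on $k-1$ vertices, and adjoin nothing when $r=0$, a lone isolated vertex when $r=1$, or one additional maximal outerplanar graph on $r$ vertices when $2\le r\le k-2$. The resulting graph is outerplanar, $P_k$-free (each component has at most $k-1$ vertices), and has exactly the asserted number of edges.

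The main obstacle (really, the only delicate point) is the case $r=1$: one is tempted to use $q+1$ non-trivial components with sizes $k-1,\dots,k-1,2$, but this gives $2a+3c=3(q+1)$, whereas keeping $q$ components of size $k-1$ plus a single isolated vertex gives $2a+3c=2+3q=3(q+1)-1$. The cheaper alternative saves one edge, which is precisely the source of the extra $+1$ in the formula when $n\equiv 1\pmod{k-1}$. Once this trade-off is recognised, the rest of the argument is bookkeeping on $a$ and $c$.
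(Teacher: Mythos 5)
Your proposal is correct and follows essentially the same route as the paper: both reduce the problem to choosing the multiset of component orders (each non-trivial component being a maximal outerplanar graph contributing $2m_i-3$ edges, with $P_k$-freeness automatic) and then optimize, isolating the same trade-off between one isolated vertex and one extra small component that produces the $+1$ when $n\equiv 1\pmod{k-1}$. The only cosmetic difference is that you solve an explicit minimization of $2a+3c$, whereas the paper pins down the extremal structure by an exchange argument (with a tie-breaking condition on the number of components of order $k-1$) showing all but one component have order exactly $k-1$.
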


\begin{proof}
Let $G^*$ be a $P_{k}$-free outerplanar graph of order $n$ such that:
({\romannumeral1}) every component is of order at most $k-1$;
({\romannumeral2}) subject to (i), $e(G^*)$ is maximal;
({\romannumeral3}) subject to (i) and (ii), $G^*$ has maximal number of
components  with $k-1$ vertices.

Assume that $G^*$ has $s$ components $G_1,\ldots,G_s$, where $|G_i|=n_i\leq k-1$ for $i\in \{1,\ldots,s\}$
and $n_1\ge \dots \ge n_s$.
By the definition of $G^*$, $n_i+n_j\geq k$ for any two components $G_i$ and $G_j$.
Moreover, recall that $e(G)=2|G|-3$ for every maximal outerplanar graph $G$ with $|G|\geq2$.
Thus, $e(G_{i})=2n_i-3$ for every non-trivial component $G_i$.

In the following we first show that $n_i=k-1$ for each $i\le s-1$.
Suppose to the contrary that $n_{i_0}\le k-2$ for some $i_0\leq s-1$.
Then $n_j\geq 2$ for every $j\neq i_0$.
Consequently, $n_{i_0}\geq n_s\geq2$
and $e(G_{i_0})+e(G_s)=2(n_{i_0}+n_s)-6$.
Now, define two new components $G'_{i_0}$ and $G'_s$ as follows:
$G'_{i_0}$ and $G'_s$ are maximal outerplanar graphs with $|G'_{i_0}|=k-1$ and $|G'_s|=(n_{i_0}+n_s)-(k-1)$. Note that $k\leq n_{i_0}+n_s\leq 2(k-2)$. Then $1\leq |G'_s|\leq k-3.$ Moreover,
$e(G'_{i_0})=2|G'_{i_0}|-3$ and $e(G'_s)\in \{2|G'_s|-3, 2|G'_s|-2\}$, since $|G'_{i_0}|=k-1\geq3$ and $|G'_s|\geq1$.
Let $G'$ be the graph obtained from $G^*$
by replacing components $G_{i_0}$ with $G'_{i_0}$ and $G_s$ with $G'_s$.
Thus, $$e(G')-e(G^*)=(e(G'_{i_0})+e(G'_s))-(e(G_{i_0})+e(G_s))\geq0.$$
However, $G'$ has a new component $G'_{i_0}$ of order $k-1$, which contradicts the choice of $G^*$.

Now we have that $n_i=k-1\ge 3$ for $i\le s-1$.
Hence, $s=\lceil\frac{n}{k-1}\rceil$ and
 $${\rm ex}''_{\mathcal{OP}}(n,P_k)=\left\{
                                       \begin{array}{ll}
                                        2n-3s+1  & \hbox{if $n_s=1$,} \\
                                         2n-3s  & \hbox{if $n_s\geq2$.}
                                       \end{array}
                                     \right.
$$
The desired result follows.
\end{proof}

Having Lemmas \ref{t31} and \ref{t32} in hand, we are ready to give the outerplanar Tur\'{a}n numbers of paths.

\begin{thm}\label{t34}
Let $n$ and $k$ be two positive integers with $n\ge k\ge 4$. Then
$${\rm ex}_{\mathcal{OP}}(n,P_k)=
\max\{{\rm ex}'_{\mathcal{OP}}(n,P_k),{\rm ex}''_{\mathcal{OP}}(n,P_k)\}.$$
\end{thm}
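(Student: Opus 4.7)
The inequality ${\rm ex}_{\mathcal{OP}}(n,P_k)\ge \max\{{\rm ex}'_{\mathcal{OP}}(n,P_k),{\rm ex}''_{\mathcal{OP}}(n,P_k)\}$ is immediate, since the extremal graphs constructed in the proofs of Lemmas~\ref{t31} and~\ref{t32} are $P_k$-free outerplanar graphs on $n$ vertices, so each of the two quantities on the right is a lower bound on ${\rm ex}_{\mathcal{OP}}(n,P_k)$. Only the reverse inequality requires work.

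For the upper bound I would take an extremal $P_k$-free outerplanar graph $G$ on $n$ vertices and let $G_1,\dots,G_s$ be its components ordered by size with $n_i:=|G_i|$ and $n_1\ge n_2\ge \cdots\ge n_s$, and split into three cases. When $s=1$, the graph $G$ is connected and Lemma~\ref{t31} gives $e(G)\le {\rm ex}'_{\mathcal{OP}}(n,P_k)$ at once. When $s\ge 2$ and $n_1\le k-1$, every component has order at most $k-1$, so by definition of ${\rm ex}''_{\mathcal{OP}}$ we have $e(G)\le {\rm ex}''_{\mathcal{OP}}(n,P_k)$.

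The only remaining case is $s\ge 2$ together with $n_1\ge k$. Here each component $G_i$ is itself a connected $P_k$-free outerplanar graph, so Lemma~\ref{t31} (when $n_i\ge k$) and Fact~\ref{fact2.1}(iii) (when $2\le n_i\le k-1$) bound $e(G_i)$ by
\[
f(n_i):=\begin{cases}
2n_i-\lceil (n_i-k+1)/(\lfloor k/2\rfloor-1)\rceil-4, & n_i\ge k,\\
2n_i-3, & 2\le n_i\le k-1,\\
0, & n_i=1.
\end{cases}
\]
Thus $e(G)\le \sum_{i=1}^s f(n_i)$, and it suffices to verify the purely arithmetic inequality
\[
\sum_{i=1}^s f(n_i)\le \max\{{\rm ex}'_{\mathcal{OP}}(n,P_k),\,{\rm ex}''_{\mathcal{OP}}(n,P_k)\}
\]
under the constraints $s\ge 2$ and $n_1\ge k$.

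I would establish this arithmetic inequality by induction on $n$, peeling off the large component $G_1$. The induction hypothesis applied to the $P_k$-free outerplanar graph on $n-n_1<n$ vertices formed by $G_2,\dots,G_s$ (noting that the base case $n-n_1\le k-1$ is handled by Fact~\ref{fact2.1}(iii)) reduces the task to verifying
\[
f(n_1)+\max\{{\rm ex}'_{\mathcal{OP}}(n-n_1,P_k),\,{\rm ex}''_{\mathcal{OP}}(n-n_1,P_k)\}\le \max\{{\rm ex}'_{\mathcal{OP}}(n,P_k),\,{\rm ex}''_{\mathcal{OP}}(n,P_k)\}
\]
for each $n_1\in[k,n]$. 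I expect this to be the main obstacle: it is a comparison of ceiling functions which, via the closed forms of Lemmas~\ref{t31} and~\ref{t32}, reduces to case analysis on the parity of $k$ together with $n$ and $n_1$ modulo $\lfloor k/2\rfloor-1$ and $k-1$. The tight cases arise for small $k$; for instance when $k=5$ the bound ${\rm ex}''_{\mathcal{OP}}$ dominates and the extremum is achieved by mixed partitions such as $n_1=5,\ n_2=4$, where the two sides agree exactly.
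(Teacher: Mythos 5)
Your reduction is set up correctly: the lower bound is indeed immediate, the cases $s=1$ and ``all components of order at most $k-1$'' follow directly from Lemmas~\ref{t31} and~\ref{t32}, and you have correctly isolated the crux, namely that a disconnected extremal graph cannot have a component of order at least $k$ unless it is beaten by one of the two canonical configurations. But you have not proved that crux. The displayed inequality
$f(n_1)+\max\{{\rm ex}'_{\mathcal{OP}}(n-n_1,P_k),{\rm ex}''_{\mathcal{OP}}(n-n_1,P_k)\}\le \max\{{\rm ex}'_{\mathcal{OP}}(n,P_k),{\rm ex}''_{\mathcal{OP}}(n,P_k)\}$
is exactly where all the content of the theorem lives --- note that $\sum_i f(n_i)$ is always attainable by a $P_k$-free outerplanar graph with components of orders $n_1,\dots,n_s$, so your arithmetic claim is essentially a restatement of the theorem itself rather than a lemma en route to it. Writing ``I expect this to be the main obstacle'' and describing it as a case analysis on ceilings modulo $\lfloor k/2\rfloor-1$ and $k-1$ is an accurate diagnosis, but it leaves the proof incomplete at precisely the step that requires work; as written this is a genuine gap, not a routine verification. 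There is also a secondary wrinkle in your induction: for $n-n_1<k$ neither the theorem nor the closed form of Lemma~\ref{t32} applies to the remainder $G_2\cup\dots\cup G_s$, so those base cases need their own (easy, but separate) estimate $2(n-n_1)-3$, and the corresponding instances of your inequality must be checked as well.

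For comparison, the paper avoids the global multi-term comparison entirely by imposing a secondary extremality condition on $G^*$ and performing a single local exchange. For $k\in\{4,5\}$ it takes $G^*$ with the maximum number of components of order $k-1$ and shows that a component of order $n_1\ge k$ can be split into a maximal outerplanar graph on $k-1$ vertices plus an extremal connected piece on $n_1-k+1$ vertices without losing edges, contradicting the secondary choice; for $k\ge 6$ it takes $G^*$ with the minimum number of components and shows that merging $G_1$ (of order at least $k$) with the smallest component $G_s$ into one extremal connected graph does not lose edges, again a contradiction. Each exchange requires only one two-term comparison of the closed forms (e.g.\ ${\rm ex}'_{\mathcal{OP}}(n_1,P_k)+e(G_s)\le{\rm ex}'_{\mathcal{OP}}(n_1+n_s,P_k)$), which is far lighter than the inequality you would need to verify for every $n_1\in[k,n]$ against the maximum of two different ceiling expressions. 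If you want to salvage your route, you must actually carry out that case analysis; otherwise the exchange-argument structure is the more economical path.
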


\begin{proof}
Let $G^*$ be a $P_{k}$-free outerplanar graph of order $n$ with maximal size.
Assume that $G^*$ has $s$ components $G_1,\ldots,G_s$, where $|G_i|=n_i$ for $i\in \{1,\ldots,s\}$
and $n_1\ge \dots \ge n_s$.
We consider two cases.

\noindent{{\bf{Case 1.}}} $k\in \{4,5\}$.

We have $\lfloor\frac{k}{2}\rfloor=2$, and we may assume that
$G^*$ is an extremal graph such that:
({\romannumeral1}) $e(G^*)$ is maximal;
({\romannumeral2}) subject to the condition (i), $G^*$ has maximal number of components with $k-1$ vertices.

We first claim that $n_1\leq k-1$.
Suppose to the contrary that $n_1\ge k$.
Then by Lemma \ref{t31}, we have $$e(G_1)={\rm ex}'_{\mathcal{OP}}(n_1,P_k)=n_1+k-5.$$
Now, we define $G'_1$ and $G''_1$ as follows:
$G'_1$ is a maximal outerplanar graph with $k-1$ vertices,
and $G''_1$ is a connected $P_k$-free outerplanar graph on $n_1-k+1$ vertices with maximal number of edges.

Obviously, $e(G'_1)=2|G'_1|-3=2k-5.$ Moreover,
if $n_1=k$, then $e(G''_1)=0;$
if $k+1\leq n_1\le 2k-2$, then $G''_1$ is also a non-trivial maximal outerplanar graph
and hence $e(G''_1)=2|G'_1|-3=2n_1-2k-1;$
if $n_1\ge 2k-1$,
then $e(G''_1)={\rm ex}'_{\mathcal{OP}}(n_1-k+1,P_k)=n_1-4$ by Lemma \ref{t31}.
It follows that $e(G''_1)\geq n_1-k$ and $e(G'_1)+e(G''_1)\geq n_1+k-5$.

Let $G'$ be the graph obtained from $G^*$
by replacing component $G_1$ with two new components $G'_1$ and $G''_1$.
Then $e(G'_1)+e(G''_1)\geq n_1+k-5\geq e(G_1).$
It follows that $e(G')\geq e(G^*).$
However, $G'$ has a new component $G'_1$ of order $k-1$, which contradicts the definition of $G^*$.
Therefore, the previous claim holds, that is, $n_i\leq k-1$ for each $i$.
By Lemma \ref{t32}, we have $e(G^*)={\rm ex}''_{\mathcal{OP}}(n,P_k)$.

\noindent{{\bf{Case 2.}}} $k\ge 6$.

We may assume that $G^*$ is an extremal graph such that:
({\romannumeral1}) $e(G^*)$ is maximal;
({\romannumeral2}) subject to the condition (i), $G^*$ has minimal number of components.

We first assume that $s\ge 2$ and $n_1\ge k$.
By Lemma \ref{t31}, we have $$e(G_1)={\rm ex}'_{\mathcal{OP}}(n_1,P_k)=2n_1-\Big\lceil\frac{n_1-k+1}{\lfloor\frac{k}{2}\rfloor-1}\Big\rceil-4.$$
Let $G'_{1}$ be a connected $P_k$-free outerplanar graph on $n_1+n_s$ vertices with maximal number of edges.
Similarly, $$e(G'_{1})={\rm ex}'_{\mathcal{OP}}(n_1+n_s,P_k)=2(n_1+n_s)-
\Big\lceil\frac{(n_1+n_s)-k+1}{\lfloor\frac{k}{2}\rfloor-1}\Big\rceil-4.$$
Note that
 $$e(G_s)=\left\{
                                       \begin{array}{ll}
                                        0  & \hbox{if $n_s=1$,} \\
                                        2n_s-3  & \hbox{if $2\le n_s\le k-1$,}\\
                                        2n_s-\big\lceil\frac{n_s-k+1}{\lfloor\frac{k}{2}\rfloor-1}\big\rceil-4  & \hbox{if $n_s\geq k$.}
                                       \end{array}
                                     \right.
$$
By straightforward computation, we can find that $e(G_1)+e(G_s)\le e(G'_1).$
Now let $G'$ be the graph obtained from $G^*$
by replacing two components $G_1$ and $G_s$ with a new component $G'_1$.
Then $e(G')\geq e(G^*).$
However, $G'$ has exactly $s-1$ components, which contradicts the choice of $G^*$.
Therefore, $s=1$ or $n_1\le k-1$.

If $s=1$, that is, $G^*$ is connected,
then by Lemma \ref{t31} we have $e(G^*)={\rm ex}'_{\mathcal{OP}}(n,P_k)$.
If $n_1\le k-1$, then by Lemma \ref{t32} we have $e(G^*)={\rm ex}''_{\mathcal{OP}}(n,P_k)$.
This completes the proof.
\end{proof}

We end this section with the following remark.

\noindent{{\bf{Remark.}}} We can observe in the proof of Theorem \ref{t34} that ${\rm ex}'_{\mathcal{OP}}(n,P_k)\le {\rm ex}''_{\mathcal{OP}}(n,P_k)$ for $k\in \{4,5\}$.
For $k=7$, direct computation gives that
 $${\rm ex}'_{\mathcal{OP}}(n,P_k)\left\{
                                       \begin{array}{ll}
                                        >{\rm ex}''_{\mathcal{OP}}(n,P_k)  & \hbox{if $n\equiv2\pmod{6}$,} \\
                                        <{\rm ex}''_{\mathcal{OP}}(n,P_k)  & \hbox{if $n\equiv0,5\pmod{6}$,}\\
                                        ={\rm ex}''_{\mathcal{OP}}(n,P_k)  & \hbox{if $n\equiv1,3,4\pmod{6}$.}
                                       \end{array}
                                     \right.
$$
For $k=6$ or $k\ge 8$ and sufficiently large $n$, we can see that
${\rm ex}'_{\mathcal{OP}}(n,P_k)> {\rm ex}''_{\mathcal{OP}}(n,P_k)$.


\end{document}